\title{Extremal Hypercuts and Shadows of Simplicial Complexes }
\author{Nati Linial\thanks{School of Computer Science and engineering, The Hebrew University of Jerusalem, Jerusalem, Israel. Email: {\tt nati@cs.huji.ac.il}. Research supported by an ERC grant "High-dimensional combinatorics".}
\and Ilan Newman\thanks{
Department of Computer Science, University of Haifa, Haifa, Israel. Email: {\tt ilan@cs.haifa.ac.il}. This Research was supported by The Israel Science Foundation (grant number 862/10.)}
\and Yuval Peled\thanks{School of Computer Science and engineering, The Hebrew University of Jerusalem, Jerusalem, Israel. Email: {\tt yuvalp@cs.huji.ac.il}. Yuval Peled is grateful to the Azrieli Foundation for the award of an Azrieli Fellowship.}
\and Yuri Rabinovich\thanks{
Department of Computer Science, University of Haifa, Haifa, Israel. Email: {\tt yuri@cs.haifa.ac.il}. This Research was supported by The Israel Science Foundation (grant number 862/10.)}
}
\date{}
\date{\today}
\newcommand{\ignore}[1]{}
\newcommand{\Z}{\ensuremath{\mathbb Z}}
\newcommand{\Q}{\ensuremath{\mathbb Q}}
\newcommand{\N}{\ensuremath{\mathbb N}}
\newcommand{\f}{\ensuremath{\mathbb F}}
\newcommand{\floor}[1]{\ensuremath{\left \lfloor{#1}\right \rfloor }}
\newtheorem{theorem}{Theorem}[section]
\newtheorem{proposition}[theorem]{Proposition}
\newtheorem{lemma}[theorem]{Lemma}
\newtheorem{claim}[theorem]{Claim}
\newtheorem{corollary}[theorem]{Corollary}
\newtheorem{observation}[theorem]{Observation}
\newtheorem{fact}[theorem]{Fact}
\newtheorem{conjecture}[theorem]{Conjecture}
\newtheorem{definition}[theorem]{Definition}
\newtheorem{remark}[theorem]{Remark}
\def \link {{\rm link}}
\def \sign {{\rm sign}}
\def \dim {{\rm dim}}
\def \im {{\rm Im}}
\def \ker {{\rm ker}}
\def \rank {{\rm rank}}
\def \Kn {K_n^{(2)}}
\begin{document}
\maketitle
\pagenumbering {arabic} 

\def \e {\epsilon}
\def \d {\delta}
\def \P {\mathcal P}

\def \noproof
 {\hspace*{0pt} \hfill {\quad \vrule height 1ex width 1ex depth 0pt}}
\def \xor {\oplus}

\begin{abstract}
Let $F$ be an $n$-vertex forest.
We say that an edge $e\notin F$ is in the {\em shadow} of $F$ if
$F\cup\{e\}$ contains a cycle. It is easy to see that if $F$ is ``almost a tree", that is, it
has $n-2$ edges, then at least $\lfloor\frac{n^2}{4}\rfloor$ edges are in its
shadow and this is tight. Equivalently, the largest number
of edges an $n$-vertex {\em cut} can
have is $\lfloor\frac{n^2}{4}\rfloor$. These notions have natural analogs in higher $d$-dimensional simplicial complexes, graphs being the case $d=1$. The results in dimension $d>1$ turn out to be remarkably different from the case in graphs. In particular the corresponding bounds
depend on the underlying field of coefficients. We find the (tight) analogous
theorems for $d=2$. We construct $2$-dimensional ``$\Q$-almost-hypertrees" (defined below) with an {\em empty} shadow. We also show that the shadow of an ``$\f_2$-almost-hypertree" cannot be empty, and its least possible density is $\Theta(\frac{1}{n})$. In addition we construct very large hyperforests with a shadow that is empty over every field.

For $d\ge 4$ even, we construct $d$-dimensional $\f_2$-almost-hypertree whose shadow has density $o_n(1)$.

Finally, we mention several intriguing open questions.

\end{abstract}

\section{Introduction}

This article is part of an ongoing research effort to bridge
between graph theory and topology (see, e.g.~\cite{kalai, sum_complex,DNRR,lin_mesh,bab_kah,farber,gromov,lubotzky}).
This research program starts from the observation that a graph can be viewed as a $1$-dimensional simplicial complex, and that many basic concepts of graph theory such as connectivity, forests, cuts, cycles, etc., have natural counterparts in the realm of higher-dimensional simplicial complexes.
As may be expected, higher dimensional objects tend to be more complicated
than their $1$-dimensional counterparts, and many fascinating phenomena reveal themselves from the present vantage point. This paper is dedicated to
the study of several extremal problems in this domain. We start by introducing some of the
necessary basic notions and definitions.

\vspace{0.7cm}
\noindent
{\bf Simplices, Complexes, and the Boundary Operator:} \\
%

All simplicial complexes considered here have $[n]=\{1,\ldots,n\}$ or $\Z_n$ as their {\em vertex set} $V$. A simplicial complex $X$ is a collection of subsets of $V$ that is closed under taking subsets. Namely, if $A\in X$ and $B\subseteq A$, then $B\in X$ as well. Members of $X$ are called {\em faces} or {\em simplices}. The {\em dimension} of the simplex $A\in X$ is defined as $|A|-1$.
A $d$-dimensional simplex is also called a $d$-simplex or a $d$-face for short. The dimension $\dim(X)$ is defined as $\max\dim(A)$ over all faces $A\in X$, and we also refer to a $d$-dimensional simplicial complex as a $d$-complex. The {\em size} $|X|$ of a $d$-complex $X$ is the number $d$-faces in $X$.
The complete $d$-dimensional complex $K_n^d = \{\sigma \subset [n] ~|~
|\sigma| \leq d+1 \}$ contains all simplices of dimension $\leq d$. If $X$ is a $d$-complex and $t < d$, the collection of all faces of dimension $\le t$ in $X$ is a simplicial complex that we call the $t$-{\em skeleton} of $X$. If this $t$-skeleton coincides with $K_n^t$ we say that $X$ has a {\em full} $t$-dimensional skeleton. If $X$ has a full $(d-1)$-dimensional skeleton (as we usually assume), then its {\em complement}  $\bar X$ is defined by taking a full $(d-1)$-dimensional skeleton and those $d$-faces that are not in $X$. 

The permutations on the vertices of a face $\sigma$ are split in two
{\em orientations} of $\sigma$, according to the permutation's
sign. The {\em boundary operator} $\partial=\partial_d$ maps an oriented
$d$-simplex $\sigma = (v_0,...,v_d)$ to the formal sum
$\sum_{i=0}^{d}(-1)^i(\sigma\setminus v_i)$, where $\sigma\setminus
v_i=(v_0,...v_{i-1},v_{i+1},...,v_d)$ is an oriented
$(d-1)$-simplex. We fix some field $\f$ and linearly extend
the boundary operator to free $\f$-sums of
simplices. We consider the ${n \choose d}\times{n \choose d+1}$
matrix form of $\partial_d$ by choosing arbitrary orientations for
$(d-1)$-simplices and $d$-simplices in $K_n^d$. Note that changing the
orientation of a $d$-simplex (resp. $d-1 $-simplex) results in
multiplying the corresponding column (resp. row) by $-1$. Thus the
$d$-boundary of a weighted sum of $d$ simplices, viewed as a vector $z$
(of weights) of dimension ${n \choose d+1}$ is just the matrix-vector
product $\partial_d z$. We denote by $M_X$ the submatrix of $\partial_d$ restricted to the columns associated with $d$-faces of a $d$-complex $X$.

The specific underlying fields that we consider in this paper are $\Q$ or $\f_2$. (In the latter case orientation is redundant). It is very interesting to extend the discussion to the case where everything is done over a commutative ring, and especially over $\Z$, but we do not do this here. We associate each column in the matrix form of $\partial_d$
with the corresponding $d$-simplex $\sigma$. This is an ${n \choose d}$-dimensional vector of $0,\pm 1$ whose support corresponds to the boundary of $\sigma$.
It is standard and not hard to see that for every choice of ground field, the matrix
$\partial_d$ has rank ${n-1 \choose d}$. A fundamental (easy) fact is that
$\partial_{d-1} \cdot \partial_{d} = 0$ for any $d$.

\vspace{0.7cm}
\noindent
{\bf  Rank Function and other notions:} \\
If $S$ is a collection of $n$-vertex $d$-simplices, then we define\footnote{In the language of simplicial homology, consider the $d$-complex $K(S)$ whose set of $d$-faces is $S$. Then, $\rank(S)$ is
just the dimension of $B_{d-1}$, the linear space of $(d-1)$-boundaries of $K(S)$,
and $|S| - \rank(S)$ is the dimension of the homology group $H_d(K(S))$.}
its {\em rank} as the $\f$-rank of the set of the corresponding columns of $\partial_d$.
(It clearly does not depend on the choice of orientations). The set of all $d$-faces of $K_n^d$ has rank ${{n-1}\choose d}$,
with basis being, e.g., the collection of all $d$-simplices that contain the
vertex $1$.

If $\rank(S)=|S|$, we say that $S$ is {\em acyclic} over $\f$.
A maximal acyclic set of $d$-faces is called a {\em $d$-hypertree},
and an acyclic set of size ${{n-1}\choose d}-1$ is called an {\em almost-hypertree}.
Hypertrees over $\Q$ were studied e.g., by Kalai~\cite{kalai} and others~\cite{adin,duval}
in the search for high-dimensional analogs of Cayley's formula for the number of labeled trees. A {\em $d$-dimensional hypercut} (or {\em $d$-hypercut} in short) is an inclusion-minimal set of $d$-faces that intersects every hypertree. It is a standard fact in matroid theory that for every hypercut $C$, there is a hypertree $T$ such that $|C \cap T|=1$.

The {\em shadow} $SH(S)$ of a set $S$ of $d$-simplices consists of all $d$-simplices $\sigma\not\in S$ which are in the $\f$-linear span of $S$, i.e., such that $\rank (S\cup\sigma) = \rank(S)$. A set of $d$-faces is a hypercut iff its complement is the union of an almost-hypertree and its shadow. If $SH(S) = \emptyset$, we say that $S$ is {\em closed} or {\em shadowless}. For instance, a set of edges in a graph is closed if it is a disjoint union of cliques.

We turn to define $d$-{\em collapsibility}. A $(d-1)$-face $\tau$ in a
$d$-complex $K$ is called {\em exposed} if it is contained in exactly one
$d$-face $\sigma$ of $K$. An elementary $d$-collapse on $\tau$ consists of
the removal of $\tau$ and $\sigma$ from $K$. We say that $K$ is $d$-collapsible
if it is possible
to eliminate all the $d$-faces of $K$ by a series of elementary $d$-collapses.
It is an easy observation that the set of $d$-faces in a $d$-collapsible $d$-complex
is acyclic over every field.

We refer the reader to Section~\ref{section:combin} for some more background material.

%

\vspace{1.6cm}
\noindent
{\bf Results:}

In this paper we study extremal problems concerning the possible sizes
of hypercuts and shadows in simplicial complexes.

We begin with some trivial observations on $n$-vertex graphs, starting with the (non-tight) claim that no cut can have more than ${n\choose 2} - n+2$ edges. This follows, since for every cut there is a tree that meets it in exactly one edge. Actually the largest number of edges of a cut is $\lfloor \frac{n^2}{4} \rfloor$. We investigate here the $2$-dimensional situation and discover that it is completely different from the graphical case. When we discuss $2$-complexes, we refer to $2$-faces as faces (and keep the terms vertex and edge for $0$ and $1$ dimensional faces).

A $2$-dimensional hypertree has ${n-1 \choose 2}$ faces. So, by the same reasoning, every hypercut has at most ${n \choose 3} - \left({n-1 \choose 2}-1\right)$ faces. A hypercut of this size (if one exists) is called {\em perfect}. We show that $\Q$-perfect hypercuts exist for certain integers $n$, and if a well-known conjecture by Artin\footnote{There is strong evidence for this conjecture. In particular it follows from the generalized Riemann Hypothesis.} in number theory is true, there are {\em infinitely many} such $n$. The construction is based on the $2$-complex of length-$3$ arithmetic progressions in $\Z_n$, and is of an
independent interest.

Over the field $\f_2$, surprisingly, the situation changes. There are no perfect hypercuts for $n>6$,
and the largest possible hypercut has ${n \choose 3} -  \frac{3}{4} n^2 - \Theta(n)$ faces. We completely describe all the extremal hypercuts.

Staying with $\f=\f_2$ and with $d>2$ the situation depends on the {\em parity} of $d$. As we show, for $d$ even the largest $d$-hypercuts have ${n \choose d+1}\cdot\left(1-o_n(1)\right)$ $d$-faces. When $d$ is odd, all $d$-hypercuts have density that is bounded away from 1. 
%

Equivalently, this subject can be viewed from the perspective of shadows of acyclic complexes. Thus the complement of a perfect $2$-hypercut over $\Q$ is an almost-hypertree (i.e. acylic complex with ${n-1 \choose 2}-1$ $2$-faces) with an empty shadow. Our results over $\f_2$ can be restated as saying that the least possible size of the shadow of a $2$-dimensional $\f_2$-almost-hypertree is $\frac{n^2}{4}+\Theta(n)$.

Many questions suggest themselves: Let $X$ be an $\f$-acyclic $d$-dimensional $n$-vertex simplicial complex with a full skeleton and a given size. What is the smallest possible shadow of such $X$? More specifically, what is the largest possible size of $X$ if it is shadowless?

One construction that we present here applies to all fields at once, since it is based on the combinatorial notion of collapsibility. This is a collapsible $2$-complex with $f={n-1 \choose 2} - (n+1)$ $2$-faces which remains collapsible after the addition of any other $2$-face. This yields, for every field $\f$, a shadowless $\f$-acyclic $2$-complex with $f$ $2$-faces

We note that Bj\"orner and Kalai's work \cite{BK} determines the {\em largest} possible shadow of an acyclic $d$-complex with $f$ faces. The extremal examples are maximal acyclic subcomplexes of a shifted complex with $f$ faces.

The rest of the paper is organized as following. In Section~\ref{section:combin} we introduce some additional notions in the combinatorics of simplicial complexes. Section~\ref{sec:Q} deals with the problem of largest $2$-hypercuts over $\Q$.
In Section~\ref{sec:f2} we study the same problem over $\f_2$.
In Section~\ref{sec:f2_even_dim} we construct large $d$-hypercuts over $\f_2$ for even $d \ge 4$.
In Section~\ref{sec:cl_ac} we deal with large acyclic shadowless $2$-dimensional sets.
Lastly, in Section~\ref{section:open} we present some of the many open questions in this area.
%
\section{Additional Notions and Facts from Simplicial Combinatorics}
\label{section:combin}
Recall that we view the $d$-boundary operator as a linear map over
$\f$,  that maps
vectors supported on oriented $d$-simplices to vectors supported on
$(d-1)$-simplices, given explicitly by the matrix $\partial_d$, as  defined in the Introduction.

The right kernel of $\partial_d$ is the linear space of {\em $d$-cycles}.
The left image of $\partial_d$ is the linear space $B^d(X)$ of $d$-coboundaries of $X$.
With some abuse of notation we occasionally call a set of $d$-simplices
a cycle or a coboundary if it is the {\em support} of a cycle or a coboundary.
Clearly over $\f_2$, this makes no difference. In this case each $d$-coboundary is
associated with a set $A$ of $(d-1)$-faces, and consists of those $d$-faces whose boundary has an odd intersection with $A$.

A $d$-coboundary is called {\em simple} if its support does
not properly contain the support of any other non-empty $d$-coboundary. As observed e.g., in \cite{V-con}, a coboundary is simple if and only if its support is
a hypercut. 

If $\sigma$ is a face in a complex $X$, we define its {\em link} via
$\link_\sigma(X)=\{\tau\in X : \tau \cap \sigma= \emptyset, ~ \tau\cup\sigma\in X\}$. This is clearly a simplicial complex. For instance,
the link of a vertex $v$ in a graph $G$ is $v$'s neighbour set which we
also denote by $N_G(v)$ or $N(v)$.  For a $2$-coboundary $C$ over $\f_2$ and a vertex
$v \in [n]$, it is easy to see that the graph $\link_v(C)$ generates $C$, i.e. $C = \link_v(C) \cdot \partial_2$. Namely, the characteristic vector of the $2$-faces of $C$ equals to the vector-matrix left product of the characteristic vector of the edges of $\link_v(C)$ with the boundary matrix $\partial_2$. We recall a necessary and
sufficient condition that $G = \link_v(C)$ generates a $2$-hypercut $C$
rather than a general  coboundary.

Two incident edges $uv$,$uw$ in a graph $G=(V,E)$ are said to be
$\Lambda$-{\em adjacent} if $vw\notin E$. We say that $G$ is
$\Lambda$-{\em connected} if the transitive closure of the
$\Lambda$-adjacency relation has exactly one class.
\begin{proposition}\label{prop:2cut}\cite{V-con}
A $2$-dimensional coboundary $B$ is a hypercut if and only if  the
graph $\link_v(B)$ is $\Lambda$-connected for every $v$.
\end{proposition}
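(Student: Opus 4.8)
The plan is to work over $\f_2$ throughout and to use the matroid-theoretic characterization already recorded in the excerpt: the support of a $2$-coboundary $B$ is a hypercut precisely when $B$ is a \emph{simple} coboundary, i.e.\ when no proper nonempty subset of $B$ supports a coboundary. So the whole task reduces to translating ``$B$ contains a smaller nonzero coboundary'' into a statement about the link graphs $\link_v(B)$. The key dictionary entry, stated in the excerpt, is that for any fixed vertex $v$ the graph $G=\link_v(B)$ generates $B$ as $G\cdot\partial_2$; conversely every $2$-coboundary is of the form $A\cdot\partial_2$ for some set $A$ of edges. I would first record the easy bookkeeping fact that, for an edge-set $A\subseteq\binom{[n]}{2}$, the coboundary $A\cdot\partial_2$ vanishes iff $A$ is itself a (graph) cut, i.e.\ $A=\delta(S)$ for some $S\subseteq[n]$; this is just $\partial_1\partial_2=0$ together with a rank count, and it is the $1$-dimensional instance of the same principle.

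Next I would set up the correspondence between subsets of $B$ that are coboundaries and structure inside a single link. Fix $v$ and write $G=\link_v(B)$ on vertex set $[n]\setminus\{v\}$. A subset $B'\subseteq B$ is a coboundary iff $B'$ differs from $B$ by a coboundary, i.e.\ iff $B\setminus B'$ (symmetric difference) is a coboundary; so proper nonempty sub-coboundaries of $B$ correspond to \emph{nontrivial} ways of writing $B=B_1\xor B_2$ with $B_1,B_2$ coboundaries, $B_1\cap B_2=\emptyset$. The main step is to show that such a splitting exists iff some $\link_u(B)$ is $\Lambda$-\emph{disconnected}. For the direction that $\Lambda$-disconnectedness of some link forces a splitting: if $G=\link_v(B)$ has $\Lambda$-classes partitioning its edge set into $E_1\sqcup E_2$ (both nonempty), I claim $E_1\cdot\partial_2$ and $E_2\cdot\partial_2$ are disjoint-support coboundaries summing to $B$. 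Disjointness is exactly where the $\Lambda$-adjacency definition is used: a face $uvw$ lies in the support of $E_i\cdot\partial_2$ iff an odd number of $\{uv,uw,vw\}$ lies in $E_i$ (restricting attention to faces through $v$ is handled by a symmetric argument via $\link_u$); two $\Lambda$-adjacent edges $vu,vw$ (meaning $uw\notin G$) always lie in the same class, so within a face $uvw$ with $uw\notin G$ the two edges at $v$ cannot be split between $E_1$ and $E_2$ — forcing the support of $E_1\cdot\partial_2$ and $E_2\cdot\partial_2$ to be disjoint on faces through $v$, and one argues the faces not through $v$ away using that $B$ is a coboundary (so its restriction is determined by links at other vertices). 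The converse direction — a nontrivial splitting $B=B_1\xor B_2$ yields a $\Lambda$-disconnected link — comes by taking any face $\sigma$ with $\sigma\in\partial^{-1}$-support of $B_1$ and... choosing $v\in\sigma$, and checking that the partition of $\link_v(B)$ induced by ``which edges come from $B_1$ vs $B_2$'' is $\Lambda$-closed for the same local reason; one has to be slightly careful that the induced partition is well-defined and respects $\Lambda$-adjacency at \emph{every} vertex, not just one.

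The step I expect to be the genuine obstacle is exactly this bookkeeping about faces \emph{not} incident to the fixed vertex $v$: the identity $B=\link_v(B)\cdot\partial_2$ is clean, but a $\Lambda$-class of $\link_v(B)$ only directly controls the faces through $v$, and to conclude that the two candidate coboundaries $E_i\cdot\partial_2$ are globally disjoint one must show their restrictions away from $v$ are also disjoint and that they do reassemble to all of $B$ — this is where I would invoke that $B$ itself is a coboundary (so it is consistently generated from every vertex's link simultaneously) and match up the $\Lambda$-partitions across different links. Once that consistency is in hand, ``$B$ is simple $\iff$ no link is $\Lambda$-disconnected $\iff$ every link is $\Lambda$-connected'' follows, and combined with the matroid fact that simple coboundary $=$ hypercut, the proposition is proved. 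Everything else is routine linear algebra over $\f_2$ and the elementary fact that graph cuts are exactly the kernel of $(\cdot)\cdot\partial_2$ on edges.
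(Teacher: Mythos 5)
The paper itself gives no proof of Proposition~\ref{prop:2cut}; it is stated as a fact imported from \cite{V-con}, so I can only judge your plan on its own merits. The skeleton is right: reduce ``hypercut'' to ``simple coboundary,'' fix a vertex $v$, use $B=\link_v(B)\cdot\partial_2$, and translate decompositions $B=B_1\sqcup B_2$ into partitions of $E(\link_v(B))$ and back. Your disjointness computation for the forward direction is also correct as written: if $E(G)=E_1\sqcup E_2$ is $\Lambda$-closed and a face $\{a,b,c\}$ not containing $v$ lay in both supports, then among its three edges exactly one would be in $E_1$, one in $E_2$, and the third absent from $G$; the two present edges share a vertex and are therefore $\Lambda$-adjacent, so they cannot lie in different parts. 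But you have misdiagnosed where the danger lies. That computation already disposes of \emph{all} faces not containing the fixed vertex $v$; a face $\{v,a,b\}$ lies in $E_i\cdot\partial_2$ iff $ab\in E_i$, so for those faces disjointness is immediate from $E_1\cap E_2=\emptyset$, and moreover $\link_v(E_i\cdot\partial_2)=E_i$, which also gives nonemptiness of each piece. The extra step you propose, ``matching $\Lambda$-partitions across different links,'' is not needed for this direction.

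The genuinely incomplete part of your plan is the converse. Given a proper nonempty sub-coboundary $B_1\subsetneq B$ (so $B_2=B\setminus B_1$ is a nonempty coboundary too), the natural candidate partition of $\link_v(B)$ is $\link_v(B_1)\sqcup\link_v(B_2)$, and you must (a) prove it is $\Lambda$-closed and (b) choose $v$ so both parts are nonempty. For (a) the needed fact is that a coboundary meets the four facets of every tetrahedron an even number of times: if $\{v,a,b\}\in B_1$, $\{v,a,c\}\in B_2$, and $\{v,b,c\}\notin B$, then parity on $\{v,a,b,c\}$ forces $\{a,b,c\}$ into both $B_1$ and $B_2$, contradicting disjointness. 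Your ``for the same local reason'' gestures at this but does not name it, and it is a different mechanism from the $\Lambda$-adjacency computation you used in the forward direction. For (b), you need a vertex $v$ lying in the vertex support of both $B_1$ and $B_2$; such a vertex exists because if the supports were disjoint, $\link_u(B)$ for $u$ in the support of $B_1$ would generate nothing on the faces of $B_2$, contradicting $\link_u(B)\cdot\partial_2=B$. Neither (a) nor (b) appears in your sketch, and they are the real content of that direction.
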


\section{Shadowless Almost-Hypertrees Over $\Q$}
\label{sec:Q}
The main result of this section is a construction of $2$-dimensional {\em shadowless} $\Q$-almost-hypertrees. As mentioned above, the complement of such a complex is a {\em perfect}  hypercut having ${n\choose 3}-{n-1\choose 2} + 1$ faces which is the most possible.

\begin{theorem}\label{thm:noShadow}
  Let $n\ge 5$ be a prime for which $\Z_n^*$ is generated by $\{- 1,
  2\}$. Let $X=X_n$ be a 2-dimensional simplicial complex on
  vertex set $\Z_n$ whose $2$-faces are arithmetic progressions of
  length $3$ in $\Z_n$ with difference not in $\{ 0,\pm 1\}$ . Then,
\begin{itemize}
\item  $X_n$ is $2$-collapsible, and hence it is an
almost-hypertree over every field.
\item  $SH(X_n)=\emptyset$ over $\Q$. Consequently, the complement of $X_n$ is a
perfect hypercut over $\Q$.
\end{itemize}
\end{theorem}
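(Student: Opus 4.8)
\noindent\textit{Proof plan.} The two bullets are essentially independent; I treat them in turn.

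\smallskip
\noindent\textbf{$X_n$ is an almost-hypertree over every field.}
Because $n$ is coprime to $6$, a $3$-element subset of $\Z_n$ carries at most one arithmetic-progression structure (at most one of its elements can be the average of the other two, and this pins down the common difference up to sign). Hence the $2$-faces of $X_n$ are in bijection with pairs consisting of a difference $d\in\Z_n\setminus\{0,\pm1\}$ together with a shift, identified under $d\leftrightarrow-d$; so $|X_n|=\tfrac12 n(n-3)=\binom{n-1}{2}-1$, the size of an almost-hypertree. For $2$-collapsibility set $m:=2^{-1}=\tfrac{n+1}{2}$ and collapse in $\tfrac{n-3}{2}$ rounds: in round $k$, for every $x\in\Z_n$ perform the elementary collapse of the edge $\{x,x+m^{k-1}\}$ together with the face $\{x,x+m^{k},x+m^{k-1}\}$, which is an AP of difference $m^k$. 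The key observation is that an edge of difference $\delta$ lies only in the two faces of difference $\pm\delta$ that contain it as a ``leg'' and in the single face of difference $\pm\delta/2$ that contains it as its long edge; so once rounds $1,\dots,k-1$ have deleted precisely the faces of differences $\pm m,\dots,\pm m^{k-1}$, each edge of difference $m^{k-1}$ sits in exactly one surviving face — the prescribed one — and the $n$ collapses of round $k$ do not interfere with one another. Since $\Z_n^*$ is generated by $\{-1,2\}$, the classes $\{m^k,-m^k\}$ for $1\le k\le\tfrac{n-3}{2}$ are exactly the difference-classes occurring among faces, so after $\tfrac{n-3}{2}$ rounds every face has been removed. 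A collapsible $2$-complex is acyclic over every field, and together with the size count this gives the first bullet.

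\smallskip
\noindent\textbf{Reduction of the shadow statement, and the functional $w$.}
It suffices to produce a $\Q$-coboundary $w$ with $\operatorname{supp}(w)=\bar X_n$: if some $\sigma\notin X_n$ lay in $SH(X_n)$ there would be a $2$-cycle $z$ with $\sigma\in\operatorname{supp}(z)\subseteq X_n\cup\{\sigma\}$, and since coboundaries annihilate cycles we would get $0=\langle w,z\rangle=w_\sigma z_\sigma\neq0$, a contradiction. Then $SH(X_n)=\emptyset$, and $\bar X_n$ is a hypercut because its complement $X_n=X_n\cup SH(X_n)$ is an almost-hypertree together with its (empty) shadow; as $|\bar X_n|=\binom n3-\binom{n-1}{2}+1$ it is perfect. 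To build $w$ I look for $w=\delta y$ coming from a translation-invariant $1$-cochain $y(u,v)=\psi(v-u)$ with $\psi:\Z_n\to\Q$ odd. On the oriented triple $(a,b,c)$ one has $\delta y=\psi(b-a)+\psi(c-b)-\psi(c-a)$, which on an AP of difference $d$ equals $2\psi(d)-\psi(2d)$; so I impose
\[
\psi(2d)=2\psi(d)\qquad\text{for every }d\in\Z_n\setminus\{0,\pm1\},
\]
with the normalisation $\psi(1)=1$, $\psi(0)=0$. Using that the multiplication-by-$2$ orbits on $\Z_n^*$ are the cosets of $\langle2\rangle$ and that $\langle-1,2\rangle=\Z_n^*$, one checks this linear system is consistent and determines $\psi$; in fact $\psi$ restricts to a bijection of $\Z_n^*$ onto $\{\pm2^{-j}:0\le j\le\tfrac{n-3}{2}\}$. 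By construction $\delta y$ vanishes on every face, so the whole remaining task is to show $\delta y\neq0$ on every non-face.

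\smallskip
\noindent\textbf{The main obstacle.}
A non-face is either a ``consecutive'' triple $\{x,x+1,x+2\}$ or a $3$-set that is not an AP. On a consecutive triple $\delta y=2\psi(1)-\psi(2)=2-\psi(2)\neq0$ since $|\psi(2)|\le1$. The substantial case is the non-AP one, which amounts to proving that the ``additive triples'' of $\psi$ — pairs $p,q$ with $p,q,p+q\neq0$ and $\psi(p)+\psi(q)=\psi(p+q)$ — force $p=q$, $2p+q=0$, or $p+2q=0$, i.e.\ force the underlying $3$-set to be an AP. Since each of $\psi(p),\psi(q),\psi(p+q)$ is $\pm2^{-j}$, clearing denominators rewrites the equation as $\varepsilon_1 2^{\alpha}+\varepsilon_2 2^{\beta}=\varepsilon_3 2^{\gamma}$ with $\varepsilon_i\in\{\pm1\}$ and non-negative integer exponents; a short $2$-adic-valuation argument shows its only solutions have $\alpha=\beta$ (forcing $\psi(p)=\psi(q)$, hence $p=q$ by injectivity of $\psi$) or $|\alpha-\beta|=1$ with $\gamma$ the smaller exponent and a forced sign pattern (forcing $\psi(p)=-\psi(p+q)$ or $\psi(q)=-\psi(p+q)$, hence $2p+q=0$ or $p+2q=0$). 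In each case the $3$-set is an AP whose common difference is checked to avoid $\{0,\pm1\}$, hence a genuine face — contradicting that we started from a non-face. Therefore $\operatorname{supp}(\delta y)=\bar X_n$, which by the reduction above yields $SH(X_n)=\emptyset$ and, using the matroid fact on hypercuts, that $\bar X_n$ is a perfect hypercut over $\Q$. I expect the delicate part of a full write-up to be precisely this last analysis — keeping straight the correspondence between a $3$-set and the pair $(p,q)$ of its edge-differences, and confirming that the additive triples which genuinely occur correspond to faces rather than to the excluded difference-$\pm1$ progressions — together with verifying consistency of the defining relations for $\psi$.
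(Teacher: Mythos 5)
Your proposal is correct and takes essentially the same approach as the paper's: you prove $2$-collapsibility by collapsing the difference classes $\{\pm m^k\}$ (with $m=2^{-1}$) in order, and you exhibit a $\Q$-coboundary whose edge-weights are distinct signed powers of $2$ (your $\psi$ being a bijection onto $\{\pm 2^{-j}\}$), so that vanishing of a three-term signed sum of such weights forces the triple to be an arithmetic progression of admissible difference. The paper first applies the automorphism $r\mapsto 2^{-1}r$ to move the omitted difference to $\pm 2^{-1}$, which turns your $\psi$ into the simpler weight vector $u_e=2^i$ on $E_{2^i}$ and gives the boundary matrix a block-bidiagonal form (Lemma~3.2), but the underlying ideas — the power-of-$2$ collapse order and the ``signed powers of $2$ summing to zero'' analysis — coincide with yours.
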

The entire construction and much of the discussion of $X_n$ is carried out over $\Z_n$.
However, in the following discussion, the boundary operator $M_X$ of
$X_n$ is considered over the rationals.

We start with two simple observations. First, note
that $X_n$ has a full $1$-skeleton, i.e., every edge is contained in some $2$-face of $X$.

Also, we note that the choice of omitting the arithmetic triples with difference $\pm 1$ is completely arbitrary. For every $a \in \Z_n^*$, the automorphism $r \mapsto ar$ of $\Z_n$ maps $X_n$
to a combinatorially isomorphic complex of arithmetic triples over $\Z_n$, with
omitted difference $\pm a$. Consequently, Theorem~\ref{thm:noShadow} holds equivalently for any difference that we omit. In what follows we indeed assume for
convenience that the missing difference is not $\pm 1$, but rather $\pm 2^{-1} \in \Z_n$.

For $d \in Z^*_n$, define $E_d \;=\; E_{d, n} \;=\; \left(\,
  (0,d),(1,d+1),\ldots,(n-1,d+n-1)\, \right),$ where all additions are $\bmod ~n$.
This is an ordered subset of directed edges in $X_n$.

Similarly, we consider the collection of arithmetic triples of difference $d$,
$$F_d \;=\; F_{d, n} \;=\; \left( \, (0,d,2d),(1,d+1,2d+1),\ldots,(n-1,d+n-1,2d+n-1) \,\right) .$$

Clearly every directed edge appears in exactly one $E_{d}$ and then its reversal is in
$E_{-d}$. Likewise for arithmetic triples and the $F_d$'s.
Since we assume that
$Z^*_n$ is generated by $\{-1, 2\}$, it follows that
the powers $\left\{ 2^i \right\} \subset Z_n^*$, ${i=0,\ldots,
  {\frac{n-1}{2} - 1}}$, are all distinct, and, moreover, no power is
an additive inverse of the other.  Therefore, the sets $\left\{
  E_{2^i} \right\}$, ${i=0,\ldots, {\frac{n-1}{2} - 1}}$, constitute a
partition of the $1$-faces of $X_n$. Similarly, the sets $\left\{ F_{2^j}
\right\}$, ${j=0,\ldots, {\frac{n-1}{2} - 2}}$, constitute a partition
of the $2$-faces of $X_n$. The omitted difference is $2^{\frac{n-1}{2} -
  1} \in \{  \pm 2^{-1} \} $, as assumed (the sign is determined according to whether
$2^{\frac{n-1}{2}} =1$ or $-1$).

\begin{lemma}\label{lem:boundaryOfArithComplex}
Ordering the rows of the adjacency matrix $M_{X}$ by $E_{2^i}$'s,  and ordering the columns by the $F_{2^i}$'s, the matrix $M_{X}$ takes the following form:
\begin{equation}
\label{eqn:matrix_collapse_form}
M_{X} ~=~
\left(\begin{matrix}
I+Q&0&0&...\\
-I&I+Q^2&0&...\\
0&-I&\ddots&...\\
0&0&\ddots&I+Q^{2^{\frac{n-1}{2} - 2}}\\
0&0&...&-I
\end{matrix}\right)
\end{equation}
where each entry is an $n\times n$ matrix (block) indexed by $\Z_n$, and $Q$ is
a permutation matrix corresponding to the linear map $b\mapsto b+1$ in $\Z_n$.
\end{lemma}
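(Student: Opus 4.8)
The plan is to read off $M_X$ one column (i.e. one $2$-face) at a time, using only the definition of $\partial_2$ together with the partitions of the edges of $X_n$ into the classes $E_{2^i}$, $0\le i\le\frac{n-1}{2}-1$, and of its $2$-faces into the classes $F_{2^j}$, $0\le j\le\frac{n-1}{2}-2$, recorded above. First I fix the orientations used to build $M_X$: for a chosen representative $d=2^i$ I orient the $k$-th edge ($k\in\Z_n$) of $E_d$ as $(k,k+d)$, and the $\ell$-th triangle ($\ell\in\Z_n$) of $F_d$ as $(\ell,\ell+d,\ell+2d)$. Since $X_n$ has a full $1$-skeleton and the $E_{2^i}$ partition its edges while the $F_{2^j}$ partition its $2$-faces, this assigns a well-defined orientation to every edge and every face of $X_n$, so it is a legitimate choice of orientations for $M_X$.

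Next I compute the boundary of a generic $2$-face. If $\sigma=(\ell,\ell+d,\ell+2d)$ belongs to $F_{2^j}$, so $d=2^j$, then
\[
\partial_2\sigma \;=\; (\ell+d,\ell+2d)\;-\;(\ell,\ell+2d)\;+\;(\ell,\ell+d).
\]
The first and third edges have difference $d=2^j$ and hence lie in $E_{2^j}$: they are its $(\ell+d)$-th and $\ell$-th edges respectively, each appearing with exactly the orientation I fixed, so each contributes $+1$. The middle edge $(\ell,\ell+2d)$ has difference $2d=2^{j+1}$; because $j\le\frac{n-1}{2}-2$ we have $j+1\le\frac{n-1}{2}-1$, so $2^{j+1}$ is again one of the chosen representatives and this edge is the $\ell$-th edge of $E_{2^{j+1}}$ with the fixed orientation. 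It therefore contributes $-1$ to that row, and no other edge class is touched by $\sigma$.

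Assembling these contributions gives the block structure. The block of $M_X$ in block-row $E_{2^i}$, block-column $F_{2^j}$ vanishes unless $i\in\{j,j+1\}$. For $i=j$ the $\ell$-th column has a $1$ in rows $\ell$ and $\ell+2^j$, which is exactly the $\ell$-th column of $I+Q^{2^j}$ (with $Q$ the cyclic shift $b\mapsto b+1$, so $Q^{2^j}$ is the shift by $2^j$); hence this block equals $I+Q^{2^j}$. For $i=j+1$ the $\ell$-th column has a single entry $-1$ in row $\ell$, so this block equals $-I$. Since the block-row index $i$ runs over $0,\dots,\frac{n-1}{2}-1$ while the block-column index $j$ runs only over $0,\dots,\frac{n-1}{2}-2$, the top block-row has no sub-diagonal $-I$ block and the bottom block-row has no diagonal $I+Q^{2^j}$ block; this is precisely the lower-bidiagonal form in \eqref{eqn:matrix_collapse_form}.

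Everything here is bookkeeping, and the one step I expect to require genuine care is verifying that the ``middle'' edge $(\ell,\ell+2d)$ of each face of $X_n$ falls into one of the chosen edge classes $E_{2^i}$ \emph{with the matching orientation} — equivalently, that $2d=2^{j+1}$ is one of the representatives $\{2^i\}$ rather than the additive inverse of one. This is exactly where the hypothesis that $\{2^i\}_{i=0}^{(n-1)/2-1}$ together with their negatives partition $\Z_n^*$, combined with the bound $j+1\le\frac{n-1}{2}-1$, is used; without it the sub-diagonal block could be $-Q^m$ for some shift $m$ rather than $-I$, so it is worth spelling out explicitly.
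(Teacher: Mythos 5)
Your proof is correct and takes essentially the same approach as the paper's: expand $\partial_2\sigma$ for $\sigma\in F_{2^j}$, locate the three boundary edges in the classes $E_{2^j}$ and $E_{2^{j+1}}$ at positions $\ell$, $\ell+2^j$, $\ell$, and read off the blocks $I+Q^{2^j}$ and $-I$. You spell out the choice of orientations and the observation that $2^{j+1}$ (for $j\le\frac{n-1}{2}-2$) is itself one of the chosen representatives rather than the negative of one, a point the paper leaves implicit — but the computation is the same.
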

\begin{proof}
Consider an oriented face $\sigma \in F_{2^i} \subset X_n$. Then,
$\sigma=(b,b+2^i,b+2^{i+1})$ for some $b\in\Z_n$ and $0\le i\le \frac{n-1}{2} - 2$, i.e., $\sigma$ is the $b$-th element in $F_{2^i}$. By definition, $\partial \sigma=(b,b+2^i)+(b+2^i,b+2^{i+1})-(b,b+2^{i+1})$.
The first two terms in $\partial\sigma$ are the $b$-th and $(b+2^i)$-th elements in $E_{2^{i}}$ respectively;
the third term corresponds to the $b$-th element in $E_{2^{i+1}}$.  Thus, the blocks indexed by $E_{2^{i}} \times F_{2^{i}}$ are of the form $I+Q^{2^i}$, the blocks $E_{2^{i+1}} \times E_{2^{i}}$ are $-I$, and the rest is 0.
\end{proof}

We may now establish the main result of this section.
\begin{proof}{\bf (of Theorem~\ref{thm:noShadow})}~
We start with the first statement of the theorem. Let $m= \frac{n-1}{2}$.

Lemma~\ref{lem:boundaryOfArithComplex} implies that the edges
in $E_{2^{m - 1}}$ are exposed. Collapsing on these edges leads to elimination of $E_{2^{m - 1}}$ and the faces in $F_{2^{m - 2}}$. In terms of the
matrix $M_X$, this corresponds to removing the rightmost "supercolumn". Now
the edges in $E_{2^{m - 2}}$ become exposed, and collapsing them
leads to elimination of $E_{2^{m - 2}}$, and $F_{2^{m - 3}}$. This results in exposure of
$E_{2^{m - 3}}$, etc. Repeating the argument to the end, all the faces of $X_n$ get eliminated, as claimed.

To show that $X_n$ is an almost-hypertree we need to show that the number of its $2$-faces is $\binom{n-1}{2} - 1$. Indeed,
\[
|X_n| ~=~ \sum_{j=0}^{\frac{n-1}{2} - 2} |F_{2^j}| ~=~ \left( \frac{n-1}{2} - 1 \right) \cdot n ~=~
\binom{n-1}{2} - 1\;.
\]
We turn to show the second statement of the theorem, i.e., that $SH(X_n)=\emptyset$. Let $u\in \Q^{\binom{n}{2}}$, be a vector indexed by the edges of $X_n$, where $u_e=2^i$ when $e\in E_{2^i}$. Here we think of $2^i$ as an integer (and not an element in $\Z_n$). We claim that for every $\sigma\in K_n^2$,
\[
\langle u,\partial\sigma \rangle\,=\,0 ~~\iff~~ \sigma\in X_n\,.
\]

Indeed, for every face $\sigma\in \Kn$, exactly three coordinates in the vector $\partial\sigma$ are non-zero, and they are $\pm 1$. Since the entries of $u$ are successive powers of $2$, the condition $\langle u,\partial\sigma \rangle\,=\,0$ holds iff $\partial\sigma$ (or $-\partial\sigma$) has two $1$'s in $E_{2^i}$ and one $-1$ in $E_{2^{i+1}}$ for some
$0\le i\le \frac{n-1}{2} - 1$.  This happens if and only if $\sigma$
is of the form $(b, b+2^i, b+2^{i+1})$, i.e., precisely when $\sigma \in X_n$.

This implies that $X_n$ is closed, i.e. $SH(X_n)=\emptyset$, since any 2-face $\sigma \in \Kn$ spanned by $X_n$ must satisfy $\langle u,\partial\sigma \rangle =0$, this being precisely the characterisation of $X_n$.
Thus, $X_n$ is a closed set of co-rank 1. Therefore, its complement is a hypercut. Moreover, since
$X_n$ is almost-hypertree, this hypercut is perfect.

%
\end{proof}

When the prime $n$ does not satify the assumption of Theorem~\ref{thm:noShadow} we can still say something about the structure of $X_n$. Let the group $G_n=\Z_n^*\slash\{\pm 1\}$, and let $H_n$ be the subgroup of
$G_n$ generated by $2$. Then,
\begin{theorem}
For every prime number $n$, $\rank_\Q (X_n) = |X_n| -(n-1)\cdot ([G_n:H_n] - 1)$.~ In particular,
$X_n$ is acyclic if and only if $Z_n^*$ is generated by $\{\pm 1, 2\}$.
\end{theorem}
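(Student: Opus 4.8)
The plan is to compute the rank of $M_X$ directly from the block structure in Lemma~\ref{lem:boundaryOfArithComplex}, but now without assuming that $2$ generates $\Z_n^*/\{\pm1\}$. The key point is that the diagonal blocks of $M_X$ are $I + Q^{2^i}$, and $Q$ is the $n\times n$ cyclic shift, which is diagonalised by the characters of $\Z_n$: over $\Q$ (or over $\C$, and the rank is the same) the eigenvalues of $Q$ are the $n$-th roots of unity $\omega^k$, $k\in\Z_n$, so the eigenvalues of $I+Q^{2^i}$ are $1+\omega^{2^i k}$. Thus $I+Q^{2^i}$ is singular exactly when $2^ik \equiv n/2$, which never happens for odd $n$ unless $\omega^{2^ik}=-1$ is impossible — so in fact every diagonal block $I+Q^{2^i}$ is \emph{nonsingular} for odd $n$. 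Wait: one must be careful, since the indices $i$ run only over the subgroup $H_n=\langle 2\rangle$ and the product over the diagonal telescopes. The cleaner route is: the subdiagonal $-I$ blocks let us eliminate, one supercolumn at a time exactly as in the collapsibility argument, so $\rank M_X$ equals the number of supercolumns times $n$, \emph{provided} the chain of $E_{2^i}$'s really does partition all edges and all faces. When $H_n \ne G_n$, it does not: the orbit structure of multiplication-by-$2$ on $\Z_n^*/\{\pm1\}$ breaks the edges into $[G_n:H_n]$ many "strips", and the $2$-faces into $[G_n:H_n]$ many strips as well.

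So the first step is to decompose $X_n$ according to the cosets of $H_n$ in $G_n$: pick coset representatives $g_1,\dots,g_r$ with $r=[G_n:H_n]$, and observe that the edge set of $X_n$ is partitioned into the families $\{E_{g_t 2^i}\}_i$ and the $2$-face set into the families $\{F_{g_t 2^i}\}_i$, where for each fixed $t$ the index $i$ runs over a cyclic chain of length equal to $|H_n|=(n-1)/(2r)$. The second step is to verify that $M_X$ block-decomposes accordingly: a face in $F_{g_t 2^i}$ has boundary edges only in $E_{g_t 2^i}$ (two of them, with the shift $I+Q^{g_t 2^i}$) and in $E_{g_t 2^{i+1}}$ (one of them, $-I$); so $M_X$ is block-diagonal with $r$ blocks, the $t$-th block being exactly of the shape in (\ref{eqn:matrix_collapse_form}) but \emph{cyclic} — the chain of $E$'s closes up, because $g_t 2^{|H_n|} = \pm g_t$, and $E_{-g}$ is $E_g$ with edges reversed, i.e. the relevant block is $\pm Q^{\text{something}}$, not $-I$ exactly. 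I would handle the closing-up block with care; it is either another copy tying $E_{g_t 2^{|H_n|-1}}$ back to $E_{g_t}$, with the extra twist that this is a full $n\times n$ invertible block $I+Q^{g_t 2^{|H_n|-1}}$ or the monomial block coming from reversal, but in either case it is square and the relevant $2r\cdot? $ count works out.

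The third step is the rank count for one cyclic block. Each such block has $|H_n|$ supercolumns (one per $F$-family in the coset) and $|H_n|$ superrows (one per $E$-family). Using the $-I$ subdiagonal blocks to clear, the block has full column rank $|H_n|\cdot n$ \emph{minus} the dimension of its kernel. The kernel is easy to read off: a vector in the kernel assigns to the $F$'s a cyclic sequence of $\Z_n$-vectors $z_0,z_1,\dots$ with $z_{i-1}=(I+Q^{g_t2^i})z_i$ around the cycle, so $z_0 = \big(\prod_i (I+Q^{g_t2^i})\big) z_0$; since each $I+Q^{g_t 2^i}$ is invertible for odd $n$ this forces the product to have eigenvalue $1$ on $z_0$, and a short Fourier computation shows the kernel is exactly one-dimensional per block (spanned by the fundamental cycle of that strip) — except in the one coset where the omitted difference $\pm 2^{-1}$ lies, which kills one $F$-family and makes that block acyclic. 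Hence each of the $r-1$ "full" blocks contributes a $1$-dimensional kernel, the last block contributes none, giving $\rank_\Q(X_n) = |X_n| - (r-1)\cdot(\text{kernel dim per block})$. Finally I must check the "$n-1$" rather than "$n$" factor: the all-ones vector on each $E$-family is in the \emph{left} kernel (rows sum to zero because $\partial_1\partial_2=0$ style reasoning, or because $(1,\dots,1)(I+Q^a)=2(1,\dots,1)$ — hmm, that is \emph{not} zero), so actually the correct bookkeeping is that the $2$-cycle space of $X_n$ has dimension $(n-1)(r-1)$; I would get the $(n-1)$ from the fact that within each strip the cyclic relation around the loop, when Fourier-transformed, has a solution for each character $\omega^k$ with $\prod_i(1+\omega^{k g_t 2^i})=1$, and counting these characters across all $k\ne 0$ (the $k=0$ character forced out by a parity/orientation obstruction) gives exactly $n-1$ per strip. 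The main obstacle I anticipate is precisely this last point: correctly identifying the dimension of the cycle space of a single cyclic strip and seeing why it is $n-1$ and not $n$ — equivalently, pinning down which single character fails to contribute — and handling the reversal twist $E_{-g}=E_g^{\mathrm{rev}}$ when the orbit of $2$ closes up. Once that is nailed, summing over the $r-1$ non-degenerate strips yields $\rank_\Q(X_n)=|X_n|-(n-1)(r-1)$, and acyclicity ($r=1$) is the statement that $\langle\pm1,2\rangle=\Z_n^*$.
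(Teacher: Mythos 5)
Your proposal follows the same route as the paper's (sketched) proof: decompose $M_X$ into blocks indexed by the cosets of $H_n$ in $G_n$, observe that the block containing the omitted difference $\pm 2^{-1}$ has one fewer $F$-family and is collapsible exactly as in the $r=1$ case, and argue that each of the remaining $r-1$ cyclic blocks contributes an $(n-1)$-dimensional kernel, one dimension per nonzero character of $\Z_n$ (the trivial character is excluded since every $I+Q^a$ has row sums $2\neq 0$ over $\Q$). The mid-proof assertion that the kernel is ``exactly one-dimensional per block'' is a slip you correct a few lines later, and the work you flag as remaining --- the per-character determinant count and the monomial-block twist when the $2$-orbit closes up via $E_{-g}=E_g^{\mathrm{rev}}$ --- is precisely the ``easy computation'' the paper itself leaves unspelled.
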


We only sketch the proof. We saw that the partition of $X_n$'s edges and faces to the sets $E_i$ and $F_i$. We consider also a coarser partition by joining together all the $E_i$'s and $F_i$'s for which $i$ belongs to some coset of $H_n$. This induces a block structure on $M_X$ with $[G_n:H_n]$ blocks. An argument as in the proof of Lemma \ref{lem:boundaryOfArithComplex} yields the structure of these blocks. Finally, an easy computation shows that one of these blocks is $2$-collapsible, and each of the others contribute precisley $n-1$ vectors to the right kernel.

We conclude this section by recalling the following well-known conjecture of Artin which is implied by the generalized Riemann hypothesis ~\cite{artin}.
\begin{conjecture}[Artin's Primitive Root Conjecture]
Every integer other than -1 that is not a perfect square is a primitive root modulo infinitely many primes.
\end{conjecture}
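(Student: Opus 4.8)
The final statement is Artin's primitive root conjecture, which is \emph{not} known unconditionally; the route below is the one that Hooley's argument makes rigorous under the generalized Riemann hypothesis (GRH) for the Dedekind zeta functions of the Kummer fields that arise, and I will indicate at the end what is provable without GRH. Fix an integer $a\notin\{-1,0,1\}$ that is not a perfect square, and write $a=b c^2$ with $b$ squarefree. For a prime $p\nmid a$ let $i_a(p)=[\f_p^*:\langle a\rangle]$ be the index of the subgroup generated by $a$; then $a$ is a primitive root modulo $p$ iff $i_a(p)=1$, i.e.\ iff no prime $q$ divides $i_a(p)$. First I would record the classical splitting criterion: for squarefree $k$, one has $k\mid i_a(p)$ if and only if $p$ splits completely in the Kummer field $K_k=\Q(\zeta_k,a^{1/k})$. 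Inclusion--exclusion over squarefree $k$ then gives
\[
\#\{p\le x:\ a\text{ is a primitive root mod }p\}\;=\;\sum_{k\ge 1}\mu(k)\,\pi_{K_k}(x),
\]
where $\pi_{K_k}(x)$ counts primes $p\le x$ that split completely in $K_k$.

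Next I would evaluate each term by the Chebotarev density theorem: $\pi_{K_k}(x)\sim \mathrm{Li}(x)/[K_k:\Q]$. The degree $[K_k:\Q]$ equals $k\,\varphi(k)$ except for a bounded, explicitly describable entanglement correction coming from the inclusion $\Q(\sqrt b)\subset\Q(\zeta_k)$ for suitable $k$; this correction is exactly what produces the rational factor in the conjectural density and what accounts for the genuine failures of the naive density (e.g.\ when $a$ is a perfect power, or $b\equiv 1\pmod 4$). Summing the main terms $\mu(k)/[K_k:\Q]$ over all $k$ yields $A(a)\,\mathrm{Li}(x)$ for a constant $A(a)$, and the point is that $A(a)>0$ precisely because $a\neq-1$ and $a$ is not a perfect square — which is where the two hypotheses in the statement are used. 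If the error terms can be controlled, this already gives infinitely many such $p$ (indeed positive density).

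The hard part will be the tail of the inclusion--exclusion. I would split the sum at a parameter $\xi=\xi(x)$. For $k\le\xi$ one needs an effective Chebotarev bound with error $O\!\big(\sqrt{x}\,\log(x\,\Delta_{K_k})\big)$, \emph{and this is exactly the step that forces the GRH hypothesis}: only under GRH can one take $\xi$ as large as $x^{1/2}/\log^2 x$ and still have these errors sum to $o(\pi(x))$. For $\xi<k\le\sqrt{x}$ one uses the unconditional fact that splitting completely in $K_k$ forces $p\equiv 1\pmod k$, combined with the Brun--Titchmarsh bound on $\pi(x;k,1)$, to show this range contributes $o(\pi(x))$; and for $k>\sqrt{x}$ there are essentially no primes $p\le x$ with $k\mid p-1$, so that range is negligible. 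Assembling the three ranges gives $\#\{p\le x:\ a\text{ prim.\ root}\}=A(a)\,\mathrm{Li}(x)\big(1+o(1)\big)$ with $A(a)>0$, which is far stronger than infinitude. The single genuine obstacle, and the reason the statement remains open, is the lack of an unconditional zero-free region / Chebotarev error bound for the family $\{K_k\}$ uniformly in the middle range $\xi<k\le x^{1/2}$.

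Finally, I would point out what survives without GRH. By the work of Gupta--Murty and, most sharply, Heath-Brown (using the Chen--Iwaniec half-dimensional sieve and the observation that among any set of primes at least three are multiplicatively independent), one knows: for any three nonzero multiplicatively independent integers, at least one is a primitive root for infinitely many primes; and there are at most two primes $q$ for which the conjecture could fail. Thus Artin's conjecture is known for all but an unspecified finite set of exceptions. The honest conclusion of this proposal is therefore that a complete, unconditional proof is not presently attainable; the plan above is the GRH-conditional proof due to Hooley, together with the unconditional near-results of Heath-Brown, and the sole missing ingredient is an unconditional quantitative Chebotarev theorem in the relevant uniformity.
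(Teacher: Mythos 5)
The statement you were asked about is labelled a \emph{conjecture} in the paper, and indeed the paper offers no proof: it is Artin's primitive root conjecture, cited from Moree's survey only to justify that the hypotheses of Theorem~\ref{thm:noShadow} hold for infinitely many primes (conditionally). There is therefore nothing in the paper to compare your argument against. Your proposal handles this correctly and honestly: you identify the statement as open, accurately summarize Hooley's GRH-conditional proof (Kummer-field splitting criterion, inclusion--exclusion over squarefree indices, effective Chebotarev with GRH needed precisely to push the cutoff $\xi$ up to $x^{1/2}/\log^2 x$, and Brun--Titchmarsh to tame the unconditional middle range), correctly locate where the hypotheses $a\ne -1$ and $a$ not a perfect square enter (positivity of the density constant $A(a)$), and correctly state the unconditional near-results of Gupta--Murty and Heath-Brown. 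This is precisely the right response to being asked to prove an open problem, and it matches the paper's own (implicit) stance that the statement is known only under GRH.
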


This conjecture clearly yields infinitely many primes $n$ for which $\Z_n^*$ is generated by $2$. (It is even conjectured that the set of such primes has positive
density). Clearly this implies that the assumptions of
Theorem~\ref{thm:noShadow} hold for infinitely many primes $n$.
%
%

\section{Largest Hypercuts over $\f_2$}
\label{sec:f2}
In this section we turn to discuss our main questions over the field $\f_2$. The main result of this section is:

\begin{theorem}
  \label{thm:main}
For large enough $n$, the largest size of a $2$-dimensional hypercut over $\f_2$ is ${n \choose 3}
- \left( \frac{3}{4} n^2 - \frac{7}{2}n + 4 \right)$ for even $n$ and ${n \choose 3}
- \left(\frac 34n^2-4n+\frac{25}{4} \right)$ for odd $n$.
\end{theorem}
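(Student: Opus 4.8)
The plan is to pass to the dual language of $\f_2$-coboundaries and reduce the problem to an extremal question about graphs. Recall from Section~\ref{section:combin} that a hypercut is the same thing as the support of a simple $2$-coboundary, and that by Proposition~\ref{prop:2cut} a $2$-coboundary $B$ is simple if and only if $\link_v(B)$ is $\Lambda$-connected for every vertex $v$. Write $B=A\,\partial_2$ for a graph $A$ on $[n]$; here $A$ is determined only up to symmetric difference with graph-cuts of $K_n$ (the left kernel of $\partial_2$), and any representative may be used, e.g.\ $A=\link_v(B)$. A triple $\sigma$ fails to lie in $B$ exactly when it contains an even number ($0$ or $2$) of edges of $A$, so setting $\mathrm{ev}(A):=\#\{\sigma\in\binom{[n]}{3}:|\sigma\cap A|\in\{0,2\}\}$ we have $|B|=\binom n3-\mathrm{ev}(A)$, with $\mathrm{ev}$ cut-invariant. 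Thus Theorem~\ref{thm:main} is equivalent to minimising $\mathrm{ev}(A)$ over graphs $A$ (modulo cuts) for which every link of $A\,\partial_2$ is $\Lambda$-connected. A first observation is that $\mathrm{ev}(A)=0$ if and only if $A$ is a disjoint union of at most two cliques, and then $A\,\partial_2=K_n^{(2)}$ is not simple; hence the minimum is positive, and the theorem pins down its value as $\tfrac34 n^2+\Theta(n)$.

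A key point, guiding both the construction and the lower bound, is that $\mathrm{ev}(A)$ is visible in any single link. Taking $A=\link_v(B)$ and splitting triples according to whether they contain $v$ gives, for every vertex $v$,
\[ \mathrm{ev}(A)\ =\ \binom{n-1}{2}-e(H_v)+\mathrm{ev}(H_v), \qquad H_v:=\link_v(B), \]
where $H_v$ is a $\Lambda$-connected graph on $n-1$ vertices; writing $R$ for the complement of $H_v$, the right-hand side equals $e(R)(n-2)-2\sum_u\binom{d_u(R)}{2}+4\,t(R)$, with $t$ counting triangles. Consequently the lower bound in Theorem~\ref{thm:main} reduces to evaluating, exactly and with the correct lower-order terms, the minimum of $e(R)(n-2)-2\sum_u\binom{d_u(R)}{2}+4\,t(R)$ over graphs $R$ on $n-1$ vertices whose complement is $\Lambda$-connected, together with a description of the minimisers.

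For the construction achieving the upper bound on $\mathrm{ev}$ I would start from a balanced disjoint union of two cliques, $K_{P_1}\sqcup K_{P_2}$ with $|P_1|,|P_2|\in\{\lfloor n/2\rfloor,\lceil n/2\rceil\}$, and perturb it minimally so as to become a hypercut: add a (near-)perfect matching $M$ between $P_1$ and $P_2$, and then perform a further $\Theta(n)$ local edge changes chosen to destroy the ``cocktail-party'' subgraph that otherwise appears inside some $\link_v(A\,\partial_2)$ and blocks $\Lambda$-connectivity. One then (i) checks via Proposition~\ref{prop:2cut} that each $\link_v(A\,\partial_2)$ --- which is $K_{n-1}$ with a sparse, connected, spanning graph removed --- is $\Lambda$-connected, by rotating edges around a common vertex along the removed connected subgraph; and (ii) computes $\mathrm{ev}(A)$ by cut-invariance, the $\tfrac12 n^2$-type part coming from triples with two vertices in one $P_i$ and one in the other, and the remaining $\tfrac14 n^2$-type part from the perturbation inside the $P_i$'s, totalling $\tfrac34 n^2$; the exact linear and constant terms, and the even/odd split, emerge from the parity of $n=|P_1|+|P_2|$ and of $|M|$.

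The main obstacle is the exact lower bound, i.e.\ the graph extremal problem above together with its equality case. The heart of it is a stability statement: if $\mathrm{ev}(A)$ lies in the relevant range then $A$ must, modulo cuts, be a near-balanced union of two cliques with a controlled perturbation; unions of three or more cliques are ruled out because they force $\Theta(n^3)$ ``transversal'' even triples, and a single clique is ruled out because the perturbation needed to kill the universal-vertex obstruction in its links is too expensive. Converting this qualitative picture into the precise constants $\tfrac34 n^2-\tfrac72 n+4$ and $\tfrac34 n^2-4n+\tfrac{25}{4}$, and reading off the complete list of extremal hypercuts, is where most of the effort goes; a secondary subtlety is that the one-vertex bound above must be attained simultaneously at all $n$ vertices, so the construction and the extremal graph analysis have to be carried out in tandem.
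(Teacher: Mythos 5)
Your setup is exactly the paper's: a hypercut is a simple $2$-coboundary, generated (up to cuts) by a link graph, and the complement size $\mathrm{ev}(A)$ is computed by the identity $\mathrm{ev}(A)=e(R)\,n-\sum_u d_u(R)^2+4t(R)$ for $R=\link_v(\bar C)$, which is precisely Observation~\ref{obs:zero}. The reduction to minimising this quantity over graphs $R$ on $n-1$ vertices whose complement is $\Lambda$-connected is also correct. But the proposal proves neither direction.

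For the lower bound you only gesture at a ``stability statement'' without stating or proving it; this is where essentially all of the paper's work lives. The paper derives degree-sequence constraints on $R$ from $\Lambda$-connectivity via the notion of $S$-atoms (Claim~\ref{cl:167} and Claims~\ref{cl:185}, \ref{cl:12}, \ref{cl:171}), uses averaging to find a sparse link, solves an explicit three-variable optimisation (Theorem~\ref{thm:5}) to get the coarse $\frac34 n^2$ bound and the two asymptotic regimes, then pins down the exact structure in Case~(I) by a sequence of $f$-decreasing local modifications (Lemma~\ref{lm:str_barG_caseI}) and reduces Case~(II) to Case~(I) by XORing with a star (Claim~\ref{clm::redII2I}). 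None of this appears in the proposal, and it is not routine: ruling out a single clique or a union of $\ge 3$ cliques ``up to perturbation'' with the correct constant $\frac34$ is exactly the optimisation problem, and pinning down $\frac72 n$ vs.~$4n$ in the lower-order term requires the full structure lemma and the case split on parity.

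For the upper bound the sketch as written fails: with $A=K_{P_1}\sqcup K_{P_2}\oplus M$ one gets $\mathrm{ev}(A)=|M|(n-2)\approx\frac{n^2}{2}$, \emph{below} the claimed $\frac34 n^2$, which would contradict the theorem; the resolution is that $A\,\partial_2$ is not a hypercut --- the link at $v\in P_1$ is an isolated vertex together with a cocktail-party graph, and the $\Lambda$-adjacency relation on its edges splits into disjoint $4$-cycles, so $\Lambda$-connectivity fails. You acknowledge the cocktail-party obstruction but leave the ``further $\Theta(n)$ local changes'' unspecified; these changes cost $\Theta(n^2)$ in $\mathrm{ev}$, and specifying them correctly is exactly what produces the $\frac34 n^2$ coefficient and the precise linear and constant terms. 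The paper instead exhibits $\bar G$ directly: one vertex $v_1$ of degree $\approx n/2$ attached to one endpoint of each edge of a near-perfect matching on the remaining vertices, with the precise small modifications for even and odd $n$ listed in Figure~1.
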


\begin{remark}
The proof provides as well a characterization of all the extremal cases of this theorem.
\end{remark}

Since no confusion is possible, in this section we use the shorthand term {\em cut} for a $2$-dimensional hypercut.

The first step in proving Theorem \ref{thm:main} is
the slightly weaker Theorem \ref{thm:1}. A further refinement yields the
tight upper bound on the size of cuts.

Note that since ${[n] \choose 3}$ is a coboundary, the complement
  $\bar{C}= {[n] \choose 3} \setminus C$ of any cut $C$ is a
  coboundary. Moreover, the complement of the $(n-1)$-vertex graph,
  $\link_v(C)$, is a link of $\bar{C}$. In what follows, $\link_v(C)$
  is always considered as an $(n-1)$-vertex graph with vertex set $[n] \setminus
  \{v\}$. Occasionally, we will consider the graph $\link_v(C) \cup \{v\}$ which has $v$ as an isolated vertex.

\begin{theorem}
  \label{thm:1}
The size of every $n$-vertex cut is at most ${n \choose 3} - \frac{3}{4}\cdot n^2  + o(n^2)$. In every cut $C$ that attains this bound there is a vertex $v$ for which the graph $G = \link_v(C)$ satisfies either
\begin{enumerate}
\item $\bar{G}$ has one
vertex of degree $\frac n2 \pm o(n)$ and all other vertices have
degree $o(n)$. Moreover, $|E(G)| = n-1 + o(n)$.
\item
$\bar{G}$ has one vertex of degree $n - o(n)$, one vertex of degree $\frac n2 \pm o(n)$, and all other vertices have
degree $o(n)$. Moreover $|E(G)| = 2n \pm o(n)$.
\end{enumerate}
\end{theorem}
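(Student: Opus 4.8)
The plan is to pass to the complement and bound the density of the complementary coboundary. Write $\bar C = \binom{[n]}{3}\setminus C$; this is a nonzero coboundary (since $\binom{[n]}{3}$ is one) and $|C| = \binom n3 - |\bar C|$, so it suffices to show $|\bar C|\ge \tfrac34 n^2 - o(n^2)$ together with the description of the near-equality cases. The main computational device is the identity
\[
|\bar C| \;=\; n\,|A| \;-\; \sum_{u} d_A(u)^2 \;+\; 4\,t_3(A)\;=\;\sum_{u} d_A(u)\Bigl(\tfrac n2 - d_A(u)\Bigr) + 4\,t_3(A),
\]
valid for every edge set $A$ with $\partial^* A = \bar C$, where $d_A$ is the degree function of the graph $A$ and $t_3(A)$ its number of triangles. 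I would prove it by classifying triangles according to how many of their edges lie in $A$: with $x_k$ the number of triangles having exactly $k$ edges in $A$, one has $\sum_k k\,x_k = (n-2)|A|$, $\sum_k\binom k2 x_k = \sum_u\binom{d_A(u)}2$, $x_3=t_3(A)$, and $|\bar C|=x_1+x_3$, which gives the formula. The key point is that $\partial^* A = \bar C$ pins down $A$ only up to graph cuts, so one may choose $A = \link_v(\bar C)$ for whichever $v$ is most convenient (regarded as a graph on $[n]\setminus v$, so that $v$ is isolated in $A$ and the identity applies).

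A soft averaging step reduces the estimate to one sparse link. If every vertex $v$ has $|E(\link_v(\bar C))|\ge\tfrac94 n$, then $3|\bar C|=\sum_v|E(\link_v(\bar C))|\ge\tfrac94 n^2$ and we are done (and such a $C$ turns out not to be extremal). Otherwise fix $v$ with $G_0:=\link_v(\bar C)$ having $|E(G_0)| < \tfrac94 n$, set $A=G_0$ in the identity and discard the nonnegative term $4t_3(G_0)$. As $C$ is a hypercut, Proposition~\ref{prop:2cut} gives that $\overline{G_0}=\link_v(C)$ is $\Lambda$-connected, so the theorem reduces to: \emph{if $G_0$ is a graph on $n-1$ vertices with $|E(G_0)|\le\tfrac94 n$ and $\overline{G_0}$ is $\Lambda$-connected, then $\sum_u d_{G_0}(u)(\tfrac n2 - d_{G_0}(u))\ge\tfrac34 n^2 - o(n^2)$, with near-equality only if $G_0$ has the degree profile of case~1 (one vertex of degree $\tfrac n2\pm o(n)$, all others $o(n)$, with $n-1+o(n)$ edges) or case~2 (additionally one vertex of degree $n-o(n)$, with $2n\pm o(n)$ edges).}

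For this key lemma I would first read off coarse structure from $\Lambda$-connectivity of $\overline{G_0}$: no two non-adjacent vertices of $G_0$ share a neighbourhood (else the corresponding edge of $\overline{G_0}$ would be a singleton $\Lambda$-class), and $G_0$ is connected and spanning, since the only disconnected graphs with $\Lambda$-connected complement are disjoint unions of two cliques, which on $n-1$ vertices would need $\Omega(n^2)\gg\tfrac94 n$ edges; hence $n-2\le|E(G_0)|\le\tfrac94 n$. Now split vertices into the $O(1)$ ``heavy'' ones of degree $\ge\epsilon n$ and the rest; the light vertices contribute at least $n(\tfrac12-\epsilon)\sum_{\mathrm{light}} d_{G_0}(u)$, so the task is to show that a heavy vertex $w$ of degree $\delta n$ forces $\Omega(\delta n)$ further edges incident to $N_{G_0}(w)$ — needed both to give the neighbours of $w$ pairwise distinct neighbourhoods and, more stringently, to keep the $\Lambda$-transitions among the edges of $\overline{G_0}$ inside $N_{G_0}(w)$ usable — and then to optimize the resulting lower bound over the number of heavy vertices and their degrees. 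The optimum is $\tfrac34 n^2$, realized when $G_0$ is a star to $\tfrac n2$ vertices plus a perfect matching to the remaining $\tfrac n2$ (a spider with legs of length two) — case~1 — or that graph with one extra near-universal vertex — case~2.

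The main obstacle is precisely this optimization. A vertex of degree exceeding $\tfrac n2$ contributes a \emph{negative} amount to $\sum_u d_{G_0}(u)(\tfrac n2 - d_{G_0}(u))$, so heavy vertices cannot simply be discarded, and case~2 shows a near-universal one genuinely occurs at the optimum; hence the number of edges a heavy vertex forces must be controlled sharply. This is exactly where one needs full $\Lambda$-connectivity and not merely twin-freeness — a perfect matching is twin-free yet, for $n>7$, its complement is not $\Lambda$-connected — and it is also where the $o(n^2)$ error and the two-family dichotomy come from. Finally, the vertex $v$ in the statement is the one produced by the averaging step, and the structural half of the lemma is exactly cases~1 and~2 after translating via $|E(\link_v(C))| = \binom{n-1}{2} - |E(G_0)|$.
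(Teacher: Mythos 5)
Your reduction is the same as the paper's: the identity $|\bar C| = n|A| - \sum_u d_A(u)^2 + 4t_3(A)$ is Observation~\ref{obs:zero} (your triangle-classification derivation is a correct alternative to the paper's link count), the averaging step producing a sparse $\bar G = \link_v(\bar C)$ is identical, and the structural consequences of $\Lambda$-connectivity you list recover Observation~\ref{obs:1991}. You also correctly isolate the remaining lemma and correctly observe that twin-freeness alone is not enough.

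The gap is that the lemma is not proved, and the outline you give would not yield the sharp constant. You extract only a per-vertex constraint --- a heavy vertex of degree $\delta n$ forces about that many extra edges, the paper's Claim~\ref{cl:185} ($d_1 \le m/2+1$) --- and then say ``optimize over the heavy vertices.'' But per-vertex constraints do not interact: with $d_1\approx d_2\approx n$ and $m\approx\tfrac94 n$, each heavy vertex individually satisfies $m \ge 2d_i$, yet $nm - \sum_u d_u^2$ drops to about $\tfrac12 n^2$, well below $\tfrac34 n^2$. What rules such configurations out is the joint constraint $d_1+d_2 \le \tfrac{m+n}{2}$ (Claim~\ref{cl:12}) and its generalization $\sum_{i\le k} d_i \le m - \tfrac n2 + \tfrac{k^2}{2} + 2^k$ (Claim~\ref{cl:171}); both come out of the $S$-atom lemma (Claim~\ref{cl:167}), which is the precise form of your informal remark about ``keeping $\Lambda$-transitions usable'' and requires a genuine $\Lambda$-path argument. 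You also omit the truncation step --- $\sum_{j>k} d_j^2 \le 4m^2/k = o(n^2)$ for $k = \omega(1)$, $k = o(\log n)$, which reduces the problem to a three-variable constrained optimization --- and the solution of that optimization (Theorem~\ref{thm:5}), a non-trivial case analysis that is precisely where the constant $\tfrac34$ and the two extremal profiles emerge. So: right framework, right reduction, but the core of the argument (Claims~\ref{cl:167}--\ref{cl:171} and Theorem~\ref{thm:5}) is missing.
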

We need to make some preliminary observations.

\begin{observation}
  \label{obs:zero}
  Let $G= (V,E)$ be a graph with $n$ vertices, $m$ edges and $t$ triangles and let $C$ be the coboundary generated by $G$. Then
$|C| = nm - \sum_{v \in V} d_v^2 +  4t$.
\end{observation}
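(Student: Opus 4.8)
The plan is a double-counting argument that keeps track of how many edges of $G$ each $3$-subset of $[n]$ induces. Recall that over $\f_2$ the coboundary $C$ generated by $E$ consists of precisely those $2$-faces $\sigma=\{x,y,z\}$ whose boundary meets $E$ in an \emph{odd} number of edges, i.e.\ such that an odd number of the pairs $xy,xz,yz$ lie in $E$. For $i\in\{0,1,2,3\}$ write $T_i$ for the number of $3$-subsets of $[n]$ that span exactly $i$ edges of $G$; then $|C|=T_1+T_3$, and $T_0+T_1+T_2+T_3=\binom n3$.

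Next I would record three incidence identities obtained by counting suitable configurations in two ways. Counting pairs (a $3$-subset, an edge of $G$ inside it) gives $T_1+2T_2+3T_3=(n-2)m$, since every edge of $G$ lies in exactly $n-2$ of the $3$-subsets. Counting pairs (a $3$-subset, a ``cherry'' of $G$ inside it, that is, a pair of edges of $G$ sharing a vertex) gives $T_2+3T_3=\sum_{v}\binom{d_v}{2}$, since there are $\sum_v\binom{d_v}{2}$ cherries, each is contained in a unique $3$-subset, and a $3$-subset spanning $i$ edges contains exactly $\binom i2$ cherries (any two edges on three vertices share a vertex). Finally, counting $3$-subsets that span a triangle of $G$ gives $T_3=t$.

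It remains to solve this small linear system for $T_1+T_3$. From the last equation $T_3=t$; the second then gives $T_2=\sum_v\binom{d_v}{2}-3t$; and the first gives $T_1=(n-2)m-2T_2-3T_3=(n-2)m-2\sum_v\binom{d_v}{2}+3t$. Hence $|C|=T_1+T_3=(n-2)m-2\sum_v\binom{d_v}{2}+4t$. Using $2\binom{d_v}{2}=d_v^2-d_v$ together with $\sum_v d_v=2m$, one gets $(n-2)m-2\sum_v\binom{d_v}{2}=(n-2)m-\sum_v d_v^2+2m=nm-\sum_v d_v^2$, and therefore $|C|=nm-\sum_v d_v^2+4t$, as claimed.

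There is no genuine obstacle here; the calculation is elementary. The only points requiring care are the correct $\f_2$-description of the coboundary (intersection with $E$ of \emph{odd}, not merely nonzero, size) and the bookkeeping in the linear system. A slicker but essentially equivalent route is to write $|C|=\sum_\sigma \mathbbm{1}\!\left[\#(\text{edges of }G\text{ in }\sigma)\text{ is odd}\right]$ and expand the indicator by inclusion--exclusion over the three edges of $\sigma$, which reproduces the same three sums $(n-2)m$, $\sum_v\binom{d_v}{2}$ and $t$.
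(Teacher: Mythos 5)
Your proof is correct. It and the paper's proof are both elementary double-counting arguments, but they organize the count differently. The paper fixes an edge $e=(u,v)\in E$ and computes $|\link_e(C)|=n-d_u-d_v+2|N(u)\cap N(v)|$ directly (the vertices $x$ adjacent to both or neither of $u,v$), then sums over edges and observes $\sum_e|\link_e(C)|=|C|+2t$, since a face of $C$ is counted once or three times according to whether it spans one or three edges of $G$. Expanding the sum gives the formula. You instead stratify all $\binom n3$ triples by the number $i$ of edges of $G$ they span, observe $|C|=T_1+T_3$, and set up a $3\times 3$ linear system from edge-incidences, cherry-incidences, and triangle-incidences. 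Both routes encode the same inclusion--exclusion over the three potential edges of a triple; the paper's version keeps the bookkeeping local to each edge and avoids introducing all four quantities $T_0,\dots,T_3$, while yours is slightly more systematic and makes the overcounting corrections transparent without any reference to links. Either is a complete and clean proof.
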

\begin{proof}
  Let $e=(u,v) \in E(G)$. Then $\link_e(C)$ consists of those vertices $x\neq u, v$ that are adjacent to both or none of $u, v$. Namely, $|\link_e(C)| = n - d_u - d_v +2|N(v) \cap N(u)|$. Clearly $|N(v) \cap N(u)|$ is the number of triangles in $G$ that contain $e$. But $\sum_{e \in E(G)}|\link_e(C)|$ counts every two-face in $C$ three times or once, depending on whether or not it is a triangle in $G$. Therefore $$|C| +2t =\sum_{(u,v) \in E} \left(n - d_u - d_v +2|N(v) \cap
N(u)|\right).$$
The claim follows.
\end{proof}

Two vertices in a graph are called {\em clones} if they have the same set of neighbours (in particular they must be nonadjacent).

\begin{observation}
  \label{obs:1991}
  For every nonempty cut $C$ and  $x\in V$ the graph
  $\link_x(\bar C)$ is
connected and has no clones, or it contains one isolated vertex and a
complete graph on the rest of the vertices.
\end{observation}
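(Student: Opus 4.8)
The plan is to use Proposition~\ref{prop:2cut} together with the duality between $C$ and $\bar C$ on links. Since $C$ is a coboundary, so is $\bar C = \binom{[n]}{3} \setminus C$, and for each vertex $x$ the graph $\link_x(\bar C)$ is the complement (within the $(n-1)$-vertex graph on $[n]\setminus\{x\}$) of $\link_x(C)$. The key observation is that $C$ being a \emph{hypercut} — equivalently, a \emph{simple} coboundary — imposes the $\Lambda$-connectivity condition on $\link_v(C)$ for every $v$. So the first step is to translate the structural hypothesis ``$C$ is a nonempty cut'' into: for every vertex $v$, the graph $G_v := \link_v(C)$ is $\Lambda$-connected, and $G_v$ is nonempty for at least one (in fact, because $C\ne\emptyset$, one checks it is nonempty for several) $v$.

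Next I would unwind what $\Lambda$-connectivity of $G_v = \link_v(C)$ says about its complement $\bar{G}_v = \link_v(\bar C)$. Two incident edges $uw, uw'$ of $G_v$ are $\Lambda$-adjacent exactly when $ww' \notin G_v$, i.e. $ww' \in \bar{G}_v$. Thinking of this in $\bar{G}_v$: a pair of vertices $w, w'$ that are adjacent in $\bar{G}_v$ forces, for every common non-neighbour $u$ of $w,w'$ in $\bar G_v$ (equivalently, common neighbour in $G_v$), the two $G_v$-edges $uw, uw'$ to lie in one $\Lambda$-class. The cleanest route is to argue the contrapositive on clones: if $w$ and $w'$ are clones in $\bar{G}_v$ — same neighbourhood, hence nonadjacent in $\bar G_v$ — then in $G_v$ they also have the same neighbourhood and $ww' \in G_v$, so no edge at $w$ can ever be $\Lambda$-linked (through a step that distinguishes $w$ from $w'$)… more precisely, the map swapping $w,w'$ is an automorphism of $G_v$ that need not preserve $\Lambda$-classes, but one shows directly that the edges incident to $w$ form a union of $\Lambda$-classes disjoint from those incident to $w'$ unless some vertex sees exactly one of them, which clone-ness forbids. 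Hence $\Lambda$-connectivity of $G_v$ together with $G_v$ having an edge at both $w$ and $w'$ is impossible; so either $\bar{G}_v$ has no clones, or the clones are ``trivial'', which is the degenerate case where the graph splits off an isolated vertex. Running the same idea for disconnectedness of $\link_x(\bar C)$: if $\bar{G}_x$ is disconnected with components $A$ and $B$ (each of size $\ge 2$), pick $a\in A, b\in B$; then $ab\in G_x$ and one shows the $G_x$-edges inside $A\cup\{b\}$ and those inside $B\cup\{a\}$ cannot be $\Lambda$-merged, contradicting $\Lambda$-connectivity — \emph{unless} one of the ``components'' is a single vertex, which is exactly the isolated-vertex exception, and then the rest must be a clique in $\bar G_x$ (no clones allowed among them, and a moment's check shows the only clone-free $\Lambda$-connectivity-compatible possibility is the complete graph).

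So the steps in order are: (1) invoke Proposition~\ref{prop:2cut} to get $\Lambda$-connectivity of every $\link_v(C)$; (2) record that $\link_x(\bar C)$ is the graph-complement of $\link_x(C)$ on $[n]\setminus\{x\}$; (3) show clones in $\link_x(\bar C)$ (other than a genuinely isolated vertex paired with nothing) would give two vertices carrying $G$-edges that lie in distinct $\Lambda$-classes, contradicting (1); (4) similarly show disconnectedness of $\link_x(\bar C)$ beyond the ``isolated vertex $+$ rest'' pattern contradicts (1); (5) in the exceptional pattern, deduce the non-isolated part is complete in $\link_x(\bar C)$. I expect the main obstacle to be step~(3)–(4): carefully verifying that a putative clone pair, or a disconnection, really does block \emph{every} $\Lambda$-path between the two sides — one has to be careful that a $\Lambda$-step is allowed to ``leave'' the relevant vertex set and return, so the argument should be phrased in terms of an invariant (e.g. which of the two special vertices an edge ``touches'', or a $2$-colouring of $G$-edges preserved by $\Lambda$-adjacency) rather than a naive connectivity claim inside a subgraph.
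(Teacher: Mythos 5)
Your proposal takes the paper's route---the paper's entire proof is the one-line remark that the observation ``[d]irectly follows from the fact that $\link_x(C)$ is $\Lambda$-connected (Proposition~\ref{prop:2cut})'', and you are supplying the unpacking the authors leave implicit. However, the claim in your step (3) does not work as stated: you assert that ``the edges incident to $w$ form a union of $\Lambda$-classes disjoint from those incident to $w'$'', which is internally inconsistent because the edge $ww'$ lies in $G:=\link_x(C)$ (clones are nonadjacent in $\bar G$, hence adjacent in $G$) and is incident to \emph{both}. The argument that actually works, and is surely what the paper has in mind, is simpler and more local: if $w,w'$ are clones in $\bar G$ then every vertex $u\notin\{w,w'\}$ satisfies $uw\in G\iff uw'\in G$, and therefore the single edge $ww'$ is $\Lambda$-isolated in $G$ (it cannot be $\Lambda$-adjacent to any $wu\in G$ since the third side $w'u$ is then also in $G$, and symmetrically for $w'u$). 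Thus $\Lambda$-connectivity of $G$ together with a clone pair in $\bar G$ forces $G$ to consist of the single edge $ww'$.

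For step (4) you correctly anticipate that a naive ``can't leave the subgraph'' argument is dangerous; the invariant you are groping for is this: a $\Lambda$-step $uv\to uw$ requires $vw\notin G$, i.e.\ $v$ and $w$ lie in the same component of $\bar G$, so the unordered pair of $\bar G$-components met by the endpoints of an edge of $G$ is constant along any $\Lambda$-path. Consequently, if $\bar G$ is disconnected then all cross-component pairs are edges of $G$, and any intra-component edge of $G$ (which exists unless that component induces a clique of $\bar G$) sits in a different $\Lambda$-class; so every component of $\bar G$ must be a clique. That is exactly the structural information you need to carry out step (5). With these two invariants in hand your outline is sound and matches the paper's one-line proof in spirit.
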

\begin{proof}
  Directly follows from the fact that $\link_x(C)$ is $\Lambda$-connected (Proposition \ref{prop:2cut}).
\end{proof}

The size of a cut $C$ for which $\link_x(\bar{C})$ is the union of a complete graph on $n-2$ vertices and an isolated vertex equals to $n-2$, which is much smaller than the bound in Theorem \ref{thm:1}. We restrict the following discussion to cuts $C$ for which
$\bar{G}=\link_x(\bar{C})$ is connected and has no clones. Let $V=V(\bar G)$, and we denote by $N(v):=N_{\bar G}(v)$. For every $S\subseteq V$, an {\em $S$-atom} is a subset $A\subseteq V$ which satisfies: $(u,v)\in E(\bar  G) \iff (u',v)\in E(\bar G)$ for every $u,u'\in A$ and $v\in S$.

The next claim generalizes Observation \ref{obs:1991}.

\begin{claim}
  \label{cl:167}
Suppose $C$ is a cut and $G=(V,E) = \link_x(C)$ for some vertex $x\notin V$. Let
$S\subseteq V$, and $G' = \bar{G} \setminus S$. Then, for every non-empty $S$-atom $A$, at least $|A|-2$ of the edges in $G'$ meet $A$.
\end{claim}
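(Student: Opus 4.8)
The plan is to reduce Claim~\ref{cl:167} to Observation~\ref{obs:1991} by contracting the $S$-atom $A$ to a single vertex. First I would form an auxiliary graph $H$ on the vertex set $(V \setminus A) \cup \{a\}$, where $a$ is a new vertex representing all of $A$: keep all edges of $\bar G$ not touching $A$, and join $a$ to $v$ exactly when $v$ is adjacent in $\bar G$ to the (common) vertices of $A$. This is well defined precisely because $A$ is an $S$-atom, at least for the neighbours of $A$ inside $S$; for neighbours outside $S$ one has to be a little careful, so I expect the actual argument to contract $A$ only with respect to the relevant part of the graph, or equivalently to work with $G' = \bar G \setminus S$ from the start. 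The point is that $H$ should again arise as $\link_y(\bar D)$ for a suitable cut $D$ (on a smaller vertex set, or after an appropriate operation on $C$), so that Observation~\ref{obs:1991} applies to $H$.

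Granting that, the key steps are: (1) identify $H$ with the link of the complement of a cut, so that by Observation~\ref{obs:1991} $H$ is either connected and clone-free, or is a clique plus an isolated vertex; (2) in the connected clone-free case, note that $H$ is connected, so in particular the vertex $a$ has degree $\ge 1$ in $H$, and more usefully that $H$ restricted to any neighborhood structure forces enough edges — here I would instead count directly: the edges of $G'$ meeting $A$ are exactly (up to the contraction) the edges of $H$ incident to $a$ together with one copy per vertex of $A$ of the edges internal among the "$A$-to-outside" connections. The cleaner route is: the edges of $G'$ incident to $A$ correspond bijectively, after contraction, to the edges of $H$ incident to $a$, each such edge $\{a,v\}$ of $H$ being realized by $|A|$ parallel edges $\{u,v\}$, $u\in A$, in $G'$. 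So I should count $\deg_H(a)$ and multiply, which does not immediately give $|A|-2$; hence the bound $|A|-2$ must come from a spanning-tree / connectivity argument \emph{inside} $A$ rather than from $a$'s degree in $H$.

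So the real mechanism, I believe, is this: consider the subgraph of $G'$ spanned by $A$ together with the edges leaving $A$. If fewer than $|A|-2$ edges of $G'$ met $A$, then $A$ together with its incident edges would have at most $|A|-2$ "external-or-boundary" edges, which after contraction would make $a$ a vertex of degree $\le |A|-2$... no — the quantity $|A|-2$ is reminiscent of the $n-2$ bound in the degenerate case of Observation~\ref{obs:1991}. So I think the correct interpretation is: restrict attention to the induced structure on $A$ inside the appropriate link, apply $\Lambda$-connectivity / Proposition~\ref{prop:2cut} \emph{locally}, and conclude that $A$ cannot be "internally clique-like and externally almost-isolated" unless at least $|A|-2$ edges of $G'$ touch it; equivalently, deleting those $<|A|-2$ edges would leave $A$ as a clique-plus-isolated-vertex configuration in some induced link, contradicting that $\link_x(C)$ generates a cut. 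I would set this up by picking the vertex $x$ and a second coordinate, passing to the doubly-iterated link $\link_{\{x,y\}}$ for $y\in S$, and invoking $\Lambda$-connectivity there.

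The main obstacle will be step~(1): cleanly exhibiting the contracted/restricted graph as a link of the complement of an honest cut (or otherwise showing it inherits the "$\Lambda$-connected" property), since contraction of a vertex set does not obviously commute with the link and coboundary operations. Once that is in place, the rest is a short counting argument keyed to the degenerate alternative "$K_{|A|-1}$ plus an isolated vertex" in Observation~\ref{obs:1991}, which is exactly where the "$-2$" comes from. I would therefore spend most of the writeup carefully justifying that the atom-contraction of a cut-link is again (the complement of the link of) a cut, and then finish by remarking that the non-degenerate alternative gives even more incident edges, so $|A|-2$ is the worst case.
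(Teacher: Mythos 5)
Your proposal never arrives at a working argument, and the route you keep returning to (contracting the atom $A$ to a single vertex and reapplying Observation~\ref{obs:1991}) has a gap you yourself correctly flag: the contracted graph has no reason to be the link of the complement of a cut, and $\Lambda$-connectivity is a global property of the link that does not obviously pass to a quotient. Your fallback of passing to a doubly-iterated link $\link_{\{x,y\}}$ for a single $y\in S$ also cannot work, because the atom hypothesis constrains the neighbourhoods in \emph{all} of $S$ simultaneously, not relative to one chosen $y$. More tellingly, your intuition about where the ``$-2$'' comes from is wrong: it is not the clique-plus-isolated-vertex degenerate alternative of Observation~\ref{obs:1991}, and that observation plays no role in the proof of Claim~\ref{cl:167}.

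The actual mechanism is a two-case count with no contraction at all. Let $H$ be the subgraph of $G'$ induced by $A$, and let $k$ be its number of connected components. If $k\le 2$, a spanning-forest count already gives $|E(H)| \ge |A|-k \ge |A|-2$, and every such edge meets $A$; this is where the ``$-2$'' lives. If $k\ge 3$, one shows using $\Lambda$-connectivity of $G=\link_x(C)$ that at most one component of $H$ can fail to send a $\bar G$-edge to $V\setminus(S\cup A)$: take components $C_1, C_2, C_3$, suppose $C_1, C_2$ are both ``trapped'' in $S\cup A$, pick an edge in $C_1\times C_2\subseteq E(G)$ and one in $C_2\times C_3\subseteq E(G)$, and follow a $\Lambda$-path in $G$ between them. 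At the first step $xy\to yw$ leaving $C_1\times C_2$ (so $xw\notin E(G)$, i.e., $xw\in\bar E$), the atom condition forces $w\notin S$ (a vertex of $S$ cannot distinguish $x$ from $y$, but $xw\notin E$ while $yw\in E$), and $w\notin A$ (else $xw\in\bar E$ would place $w$ in $C_1$), so $w\in V\setminus(S\cup A)$ and $C_1$ is not trapped after all. This gives $\ge k-1$ external $\bar G$-edges on top of the $\ge |A|-k$ internal ones, for a total $\ge |A|-1$. You should drop the contraction idea entirely and redo the proof along these lines; the crux is that $\Lambda$-paths in $G$ starting inside $C_i\times C_j$ cannot escape without producing an external $\bar G$-edge, and the atom hypothesis is used precisely to rule out an escape into $S$.
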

\begin{proof}
Let $H$ be the subgraph of $G'$ induced by an atom $A$. If $H$ has at most two
connected components, the claim is clear, since a connected graph on
$r$ vertices has at least $r-1$ edges. We next consider what happens if $H$ has
three or more connected components. We show that every component except
possibly one has an edge in $\bar E$ that connects it to $V\setminus
(S\cup A)$. This clearly proves the claim.

So let $C_1, C_2, C_3$ be connected components of $H$, and suppose
that neither $C_1$ nor $C_2$ is connected in $\bar G$ to $V\setminus
(S\cup A)$. Let $F:= \cup_{1\le i< j\le 3} C_i\times C_j\subseteq
E$. Since $G$ is $\Lambda$-connected, there must be a $\Lambda$-path
connecting every edge in $C_1\times C_2$ to every edge in $C_2\times
C_3$. However, every path that starts in $C_1\times C_2$ can never
leave it. Indeed, let us consider the first time this $\Lambda$-path
exits $C_1\times C_2$, say $xy$ that is followed by $yw$, where $x\in
C_1, y\in C_2$, $w \notin C_1\cup C_2$ and $yw \notin E$.  By the atom condition, a vertex in $S$ does not distinguish between vertices $x, y\in A$, whence $w\notin S$. Finally $w$ cannot be in $A$, for $xw\notin E$ would imply that $w\in C_1$. Hence, $C_1$ is connected in $\bar G$ to $V\setminus (S\cup A)$, a contradiction.
\end{proof}

In the following claims, let $G=(V,E) = \link_x(C)$, for a cut $C$, and $x \notin V$, and let $\bar{G} = (V,\bar{E}) = \link_x(\bar{C})$.
Denote by $d=(d_1 \geq d_2 \geq \ldots \geq d_{n-1} \geq 1)$ the sorted degree sequence of $\bar G$. We label the vertices $v_1, \ldots, v_{n-1}$ so that $d(v_i) = d_i$ for all $i$.

\begin{claim}
  \label{cl:185}
$d_1 \leq m/2 +1$.
\end{claim}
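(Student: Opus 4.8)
\textbf{Proof plan for Claim~\ref{cl:185}.} The plan is to show that the highest-degree vertex $v_1$ of $\bar G$ cannot have degree much larger than $m/2$, where $m = |E|$ is the number of \emph{non-edges} of $\bar G$ counted on the complement side — wait, let me be careful: $m = |E(G)|$, so $\bar E$ has ${n-1\choose 2} - m$ edges and $E$ has $m$ edges. The statement $d_1 \le m/2 + 1$ bounds the maximum degree of $\bar G$ by half the number of edges of $G$. The natural tool is Claim~\ref{cl:167} with a cleverly chosen set $S$: I would take $S = \{v_1\}$, so that $G' = \bar G \setminus \{v_1\}$ and the $S$-atoms are exactly the maximal sets of vertices that all agree on their adjacency to $v_1$ — that is, $A_1 = N_{\bar G}(v_1)$ (the neighbors of $v_1$) and $A_0 = V \setminus (\{v_1\}\cup N_{\bar G}(v_1))$ (the non-neighbors of $v_1$, other than $v_1$ itself).

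First I would apply Claim~\ref{cl:167} to the atom $A_1 = N_{\bar G}(v_1)$: at least $|A_1| - 2 = d_1 - 2$ edges of $G' = \bar G \setminus \{v_1\}$ meet $A_1$. Next, the key observation is that these $d_1 - 2$ edges of $\bar G$ are \emph{not} edges incident to $v_1$ in $G$: an edge of $G$ incident to $v_1$ is a non-edge $\{v_1, w\}$ of $\bar G$ with $w \notin N_{\bar G}(v_1)$, and the edges counted by the claim live inside $\bar G \setminus \{v_1\}$, hence avoid $v_1$ entirely. So I want to convert ``edges of $\bar G$ meeting $A_1$'' into a deficit of edges of $G$. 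Here is the counting idea: each vertex $w \in A_1$ satisfies $d_{\bar G}(w) + d_G(w) = n-2$, and at least $d_1 - 2$ of the $\bar G$-edges touch $A_1$; on the other hand the number of pairs within $A_1 \cup \{v_1\}$ that are \emph{edges of} $G$ is at most ${|A_1|+1 \choose 2}$ minus those $d_1$ edges to $v_1$ minus the $d_1 - 2$ internal $\bar G$-edges guaranteed by the claim — actually the cleanest route is: the subgraph of $G$ induced on $N_{\bar G}(v_1)$ has at most ${d_1 \choose 2} - (d_1 - 2)$ edges, and since $v_1$ is isolated in $G$ restricted to $N_{\bar G}(v_1)\cup\{v_1\}$, I get that $m = |E(G)| \ge$ something forcing $d_1 \le m/2 + 1$ only if I also use that \emph{every} edge of $\bar G$ inside $A_1$ contributes, together with a matching lower bound on edges of $G$ elsewhere. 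Let me instead reason via complements directly: $\bar G \setminus \{v_1\}$ has ${n-2 \choose 2} - (m - (n-1-d_1))$ edges (subtracting from the complement both the pairs at $v_1$ that were non-edges, i.e.\ $d_1$ of them... ). The bookkeeping here is the part that must be done carefully; the clean statement I expect to land on is that the number of $\bar G$-edges incident to $N_{\bar G}(v_1)$ is at least $d_1 - 2$, while simultaneously the $m$ edges of $G$ must ``pay for'' connectivity, and combining gives $d_1 - 2 \le m/2 - 1$, i.e.\ $d_1 \le m/2 + 1$.

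The main obstacle, and the step I would spend the most care on, is the precise double-counting that turns the combinatorial lower bound ``$\ge d_1 - 2$ edges of $\bar G$ meet $N_{\bar G}(v_1)$'' into the arithmetic inequality $d_1 \le m/2 + 1$. The cleanest formulation: let $A = N_{\bar G}(v_1)$ with $|A| = d_1$. Every $w \in A$ has, in $G$, at least $(n-2) - d_1$ neighbors outside $A \cup \{v_1\}$ is too weak; rather, Claim~\ref{cl:167} says $\ge d_1 - 2$ edges of $\bar G$ touch $A$ (these are ``missing'' from $G$), and also all $d_1$ pairs $\{v_1,w\}$, $w\in A$, are missing from $G$ (they are $\bar G$-edges at $v_1$); so among the ${d_1 + 1 \choose 2}$ pairs inside $A \cup \{v_1\}$, at least $d_1 + (d_1-2) = 2d_1 - 2$ are non-edges of $G$, hence $G$ has at most ${d_1+1\choose 2} - (2d_1 - 2)$ edges inside $A\cup\{v_1\}$ — this bounds $m$ from \emph{above} inside the set, which is the wrong direction. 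So the right move is the \emph{dual}: count edges of $\bar G$. Since $\bar G$ restricted to $A \cup \{v_1\}$ has at least $d_1 + (d_1 - 2)$ edges, and $\bar G$ has $\binom{n-1}{2} - m$ edges in total, one gets $\binom{n-1}{2} - m \ge 2d_1 - 2$, which still involves $n$. The resolution must be that the edges of $\bar G$ touching $A$ guaranteed by the claim lie in $G' = \bar G \setminus \{v_1\}$, and these $d_1 - 2$ edges, \emph{together with} edges of $G$ forced elsewhere, give the bound; I anticipate the correct final inequality comes from observing that each such $\bar G$-edge $\{u,w\}$ with $u,w \in A$ forces, via $\Lambda$-connectivity or a direct degree count, at least one edge of $G$ to be ``charged'' to it in a $2$-to-$1$ fashion, yielding $d_1 - 2 \le m/2 - 1$. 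I would nail down this charging argument — that is where the factor of $2$ and the constant come from — and the rest is immediate.
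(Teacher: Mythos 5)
There is a genuine gap here, and it stems from a single misidentification right at the start: you take $m = |E(G)|$, but in this section of the paper $m = |\bar E| = |E(\bar G)|$. This is set up just before Theorem~\ref{thm:1}'s proof (``Let $|\bar E|= m \leq \gamma n$'') and is consistent with how $m$ is used in the sibling claims — e.g.\ Claim~\ref{cl:12} concludes $m\ge d_1+d_2+|A|-3$ from counting edges of $\bar G$ at $v_1, v_2$ and in $A$, and Claim~\ref{cl:171} bounds $m$ from below by sums of $\bar G$-degrees $d_i$. With your reading the inequality really would be ``the wrong direction,'' which is exactly the wall you hit: you end up trying to convert counts of $\bar G$-edges into a deficit of $G$-edges, and the proposed $2$-to-$1$ ``charging argument'' is never actually produced.

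Once $m$ is read correctly, your choice of $S=\{v_1\}$ and $A = N_{\bar G}(v_1)$ in Claim~\ref{cl:167} finishes the proof in one step and no conversion between $G$ and $\bar G$ is needed: the $d_1$ edges of $\bar G$ incident to $v_1$, together with the at least $|A|-2 = d_1-2$ edges of $\bar G\setminus\{v_1\}$ meeting $A$ guaranteed by Claim~\ref{cl:167} (which are disjoint from the first set since they avoid $v_1$), give $m \ge d_1 + (d_1 - 2) = 2d_1 - 2$, i.e.\ $d_1 \le m/2 + 1$. So your instinct about which lemma and which atom to use is exactly right, but the double-counting you anticipate having to ``nail down'' does not exist; the difficulty is an artifact of the mislabeled $m$.
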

\begin{proof}
Apply Claim \ref{cl:167} with $S=\{v_1\}$ and $A=N(v_1)$. It yields
the existence of at least $|A|-2$ edges in $\bar{G}$ that meet $A$ but
not $v_1$. Since $|A|=d_1$, $m\ge d_1+(d_1-2)$, implying the claim.
\end{proof}

\begin{claim}
  \label{cl:12}
$d_1 + d_ 2 \leq \frac{m+n}{2}$.
\end{claim}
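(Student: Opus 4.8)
The plan is to imitate the ``charge the neighbourhood'' argument used for Claim~\ref{cl:185}, but handling $v_1$ and $v_2$ at the same time. Set $S=\{v_1,v_2\}$ and $W=V\setminus S$, so $|W|=n-3$, and partition $W$ into the four $S$-atoms $A_{11},A_{10},A_{01},A_{00}$, where $A_{ij}$ consists of the vertices of $W$ that are adjacent in $\bar G$ to $v_1$ exactly when $i=1$ and to $v_2$ exactly when $j=1$. Let $\epsilon=1$ if $v_1v_2\in\bar E$ and $\epsilon=0$ otherwise, and write $e_W=|E(\bar G[W])|$. Then $d_1=|A_{11}|+|A_{10}|+\epsilon$, $d_2=|A_{11}|+|A_{01}|+\epsilon$, the number of edges of $\bar G$ meeting $S$ equals $d_1+d_2-\epsilon$, and $m=e_W+d_1+d_2-\epsilon$. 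Substituting these into the desired inequality and using $n=|A_{11}|+|A_{10}|+|A_{01}|+|A_{00}|+3$, a short calculation shows that $d_1+d_2\le\frac{m+n}{2}$ is equivalent to
\[
|A_{11}|+3\epsilon-3\ \le\ e_W+|A_{00}| ,
\]
so the whole claim reduces to this single inequality about the induced graph $\bar G[W]$.

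Next I would dispose of the two easy regimes via Claim~\ref{cl:167}. Applying it with $S=\{v_1,v_2\}$ and the atom $A=A_{11}$ (so that $G'=\bar G[W]$) gives $e_W\ge|A_{11}|-2$; since $|A_{00}|\ge0$ and $3\epsilon-3\le0$, this already settles the case $\epsilon=0$, with room to spare. For $\epsilon=1$ I would instead apply Claim~\ref{cl:167} with $A=W$: here $V\setminus(S\cup A)=\emptyset$, so the ``three component'' step in that proof forces $\bar G[W]$ to be connected, whence $e_W\ge|W|-1=|A_{11}|+|A_{10}|+|A_{01}|+|A_{00}|-1$, which beats $|A_{11}|$ whenever $|A_{10}|+|A_{01}|+2|A_{00}|\ge1$, i.e. whenever $W\ne A_{11}$.

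The remaining case --- $\epsilon=1$ together with $W=A_{11}$, meaning that $v_1$ and $v_2$ are adjacent in $\bar G$ and every other vertex of $\bar G$ is adjacent to both --- is the crux, and I expect it to be the main obstacle. Here one must improve the bound to $e_W\ge|W|$, i.e. show that $\bar G[W]$ cannot be a forest. The extra leverage is that in this configuration $v_1$ and $v_2$ are \emph{isolated} in $G=\link_x(C)$, so that for every $w\in W$ the graph $\link_w(C)$ contains $x,v_1,v_2$ as three mutual clones, all with the same neighbourhood $N_{G[W]}(w)$, while the rest of $\link_w(C)$ (on $W\setminus\{w\}$) is the coboundary-link of $G[W]$. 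Since $C$ is a cut, $\link_w(C)$ must be $\Lambda$-connected by Proposition~\ref{prop:2cut}, and reconciling the clone-edges with that coboundary-link constrains $G[W]$ (equivalently $\bar G[W]$) enough to exclude a forest. Making this last step precise --- or, alternatively, observing that the base vertex $x$ can always be taken so that $\link_x(C)$ has at most one isolated vertex, so that this degenerate configuration never occurs --- is where the real work lies; everything else is the bookkeeping above plus two direct appeals to Claim~\ref{cl:167}.
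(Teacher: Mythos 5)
Your atom bookkeeping and the reduction to $|A_{11}|+3\epsilon-3\le e_W+|A_{00}|$ are correct, and the $\epsilon=0$ case is disposed of exactly as you say. The gap is in $\epsilon=1$, and it has two parts. First, the invocation of Claim~\ref{cl:167} with $A=W$ is not legitimate: both the statement of that claim and the ``three-component'' argument in its proof (which you want to reuse) require $A$ to be an $S$-atom, and $W$ is a single atom only in the degenerate situation $W=A_{11}$ that you have already set aside. The atom condition is precisely what lets the proof conclude, at the first step where a $\Lambda$-path leaves $C_1\times C_2$, that the new vertex $w$ is not in $S$; without it you do not get that $\bar G\setminus S$ is connected, so $e_W\ge|W|-1$ is not established. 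Second, the crux case $\epsilon=1$, $W=A_{11}$ is left open --- there you would need $\bar G\setminus S$ to contain a cycle, and neither the clone-counting sketch nor the ``choose a better base vertex $x$'' escape is actually carried out.

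For comparison, the paper's proof is a two-liner: apply Claim~\ref{cl:167} once with $A=N(v_1)\cap N(v_2)=A_{11}$ to get $m\ge d_1+d_2+|A_{11}|-3$, then combine with the inclusion-exclusion bound $d_1+d_2-|A_{11}|=|N(v_1)\cup N(v_2)|\le n-3$. It is worth noting that this last inequality is immediate only when $v_1v_2\notin\bar E$; when $v_1v_2\in\bar E$ it amounts to $|A_{00}|\ge 2$, which is exactly the regime where you got stuck. So your more careful accounting has surfaced a genuine, if tiny, subtlety that the two-line argument passes over. The resulting slack is only an additive constant and is harmlessly absorbed by the $o(n^2)$ in Theorem~\ref{thm:1}, so nothing downstream is affected, but in the problematic sub-case the exact inequality of Claim~\ref{cl:12} is not fully nailed down by either argument, and yours, unlike the paper's, is explicit about not having closed it.
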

\begin{proof}
Apply Claim \ref{cl:167} with $S=\{v_1,v_2\}$ and $A=N(v_1) \cap
N(v_2)$ to conclude that $m\ge d_1+d_2+|A|-3$ (as $(v_1,v_2)$ might be
an edge). By inclusion-exclusion, $n-3\ge d_1+d_2-|A|$. These two inequalities imply the claim.
\end{proof}

\begin{claim}
  \label{cl:171}
For every $k$,  $\sum_{i=1}^k d_i \leq m- \frac{n}{2} + \frac{k^2}{2}+ 2^{k}$.
\end{claim}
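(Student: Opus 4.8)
The plan is to generalize the arguments of Claims~\ref{cl:185} and~\ref{cl:12}, applying Claim~\ref{cl:167} with $S = \{v_1, \ldots, v_k\}$ and $A$ the set of common neighbours of all of $v_1,\ldots,v_k$ in $\bar G$, but this time we must be more careful about double-counting because the edges counted by Claim~\ref{cl:167} are only guaranteed to meet $A$, and several of the $v_i$ themselves may also lie in various atoms and contribute to the degree sum in overlapping ways. First I would set $S = \{v_1,\ldots,v_k\}$ and $G' = \bar G \setminus S$, and partition $V \setminus S$ into $S$-atoms. Since membership of a vertex $u \notin S$ in the neighbourhoods $N(v_1),\ldots,N(v_k)$ is determined by a subset of $\{1,\ldots,k\}$, there are at most $2^k$ distinct $S$-atoms; let them be $A_1,\ldots,A_r$ with $r \le 2^k$. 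By Claim~\ref{cl:167}, each nonempty atom $A_j$ is met by at least $|A_j| - 2$ edges of $G'$, so summing gives at least $\sum_j (|A_j| - 2) \ge (n - 1 - k) - 2^{k+1}$ edges inside $V \setminus S$; a slightly cleaner bookkeeping (only $r \le 2^k$ atoms, each losing $2$) gives at least $(n-1-k) - 2 \cdot 2^{k}$ such edges, but in fact one only needs a lower bound of the form $(n - 1 - k) - 2^{k+1}$ or similar — the constant in the exponent is what eventually yields the $2^k$ term in the claim after optimizing.

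Next I would bound $\sum_{i=1}^k d_i$ in terms of $m$ and the number of edges that are \emph{forced} to be disjoint from $S$. The key observation is that every edge of $\bar G$ is counted in $\sum_{i=1}^k d_i$ at most twice (once per endpoint in $S$), and the edges guaranteed by the atom argument to lie inside $V \setminus S$ contribute $0$ to this sum. So $\sum_{i=1}^k d_i \le 2\binom{k}{2} + \bigl(\text{edges with exactly one endpoint in } S\bigr)$, and the edges with exactly one endpoint in $S$ number at most $m$ minus the forced internal edges minus the edges inside $S$. Carefully combining: the edges of $\bar G$ split into those inside $S$ (at most $\binom k2$, each counted twice in the degree sum), those across (counted once), and those inside $V\setminus S$ (counted zero times, and there are at least $(n-1-k) - 2^{k+1}$ of them). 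Hence $\sum_{i=1}^k d_i \le 2\binom k2 + \bigl(m - \binom k2 - (n-1-k) + 2^{k+1}\bigr) + \binom k2 = m - (n - 1 - k) + 2^{k+1} + \binom k2$. Tidying the lower-order terms and absorbing $k$, $1$, and the gap between $2^{k+1}$ and $2^k$ into the stated bound $m - \frac n2 + \frac{k^2}{2} + 2^k$ finishes the proof (for $n$ large relative to $k$, or with the constants chosen so the inequality is literally as stated).

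The main obstacle I anticipate is the bookkeeping in the second paragraph: making sure that edges are not simultaneously "saved" in the atom count and "saved" as non-crossing edges, and matching the resulting constants to the exact form $\frac{k^2}{2} + 2^k$ rather than something like $\binom k2 + 2^{k+1}$. This may require either a tighter version of Claim~\ref{cl:167} applied atom-by-atom, or simply verifying that $\binom k2 + 2^{k+1} \le \frac{k^2}{2} + 2^k$ fails and therefore a sharper accounting is genuinely needed — e.g.\ noting that an $S$-atom that is a single vertex forces $0$ edges but also there are few such, or that the $2^k$ atoms cannot all be nonempty-and-small at once. A secondary subtlety is the edge $(v_i, v_j)$ inside $S$ and whether it should be counted in $m$; as in Claim~\ref{cl:12}, this accounts for a $\binom k2$-type correction and must be tracked with the right sign.
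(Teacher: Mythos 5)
Your high-level plan --- apply Claim~\ref{cl:167} to $S = \{v_1,\ldots,v_k\}$ and the (at most $2^k$) $S$-atoms that partition $V\setminus S$, then balance the degree sum against the edges of $\bar G$ classified by whether they lie inside $S$, cross, or lie inside $V\setminus S$ --- is exactly the paper's argument. However, there is a genuine quantitative gap in the middle step, and your own unease about ``matching the constants'' is a symptom of it. Claim~\ref{cl:167} guarantees, for each nonempty atom $A$, at least $|A|-2$ edges of $G' = \bar G\setminus S$ that \emph{meet} $A$. When you sum $\sum_j(|A_j|-2)$ over atoms, you are not lower-bounding the number of edges of $G'$: an edge of $G'$ has two endpoints, typically in two different atoms, so it can be counted twice in the sum. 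You must halve, obtaining
\[
|E(G')| \;\ge\; \tfrac12\bigl((n-1-k) - 2\cdot 2^k\bigr),
\]
and this factor of $\tfrac12$ is precisely what produces the $-\tfrac n2$ in the claim. Your version, $|E(G')|\ge (n-1-k)-2^{k+1}$, over-counts by a factor of two and would lead to a bound of the shape $m - n + O(2^k+k^2)$, which for $k=o(\log n)$ is strictly \emph{smaller} than the stated $m-\tfrac n2+\tfrac{k^2}{2}+2^k$ and therefore cannot be ``tidied'' into it --- you would be asserting a stronger inequality than your argument supports.

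Once the halving is inserted, the bookkeeping closes with no further cleverness: writing $a=|E_{\bar G}(S)|$, $b$ for the crossing edges, $c=|E(G')|$, one has $\sum_{i=1}^k d_i = 2a+b = m + a - c \le m + \binom k2 - c$, and substituting the halved lower bound on $c$ gives $m-\tfrac n2+\tfrac{k^2}{2}+2^k$ up to an absorbed additive constant, which is the claim. Your speculation that a tighter version of Claim~\ref{cl:167} or a sharper atom-by-atom refinement is needed is not right; the only missing ingredient is the double-counting correction. (Separately, your displayed simplification $2\binom k2 + \bigl(\cdots - \binom k2\bigr) + \binom k2 = \cdots + \binom k2$ drops a $\binom k2$, and the trailing ``$+\binom k2$'' in that sum has no justification in the preceding sentence, but these are moot once the main issue is fixed.)
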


\begin{proof}
  There are at most $2^k$ atoms of $S=\{v_1,...,v_k\}$, and we apply
  Claim \ref{cl:167} to each of them. There are at least $|A|-2$ edges
  with one vertex in atom $A$ and the other vertex not in
  $S$. Consequently, there are at least $\frac12(n-k-2\cdot 2^k)$
  edges in $\bar{G} \setminus S$ (as each edge may be counted twice).
In addition, there may be at most ${k \choose 2}$ edges induced by $S$,
  hence the total number of edges is bounded by
$$
m \ge \sum_{i=1}^{k} d_i-{k \choose 2} + \frac12(n-k-2\cdot 2^k)
$$
\end{proof}
\begin{proof} {\bf of Theorem \ref{thm:1}}
  Let $C$ be a cut, and assume by contradiction that $|\bar{C}| \leq
  \frac{\gamma}{3} n^2$, for some $\gamma < \frac 94$. By averaging, there is a
  link, say $\bar{G} = (V,\bar E) = \link_v(\bar{C})$, of at most $\frac{3|\bar C|}{n}\leq \gamma n <
  9n/4$ edges, where $V = [n]\setminus \{v\}$. Let $|\bar E|= m \leq
\gamma n$ for some $\gamma < 9/4$.
We will show that $|\bar{C}| \geq 3n^2/4 - o(n^2)$ contradicting the
assumption.

Indeed, Observation \ref{obs:zero} implies that $|\bar{C}| = mn - \sum_{v \in \bar{G}} d_v^2 +
  4t$ where $t$ is the number of triangles in $\bar{G}$. Hence it
  suffices to show that $s(\bar{C}):= mn - \sum_v d_v^2 \geq
  \frac{3}{4} \cdot n^2 - o(n^2)$.

  Given a sequence of reals $d=(d_1 \geq d_2 \geq \ldots \geq d_{n-1}
  \geq 1)$, we denote $f(d) := mn - \sum_i d_i^2$ where $m
  =\frac{1}{2} \sum_i d_i$. With this notation $s(\bar{C}) = f(d)$
  where $d$ is the sorted degree sequence of $\bar{G}$ and $v_1,
  \ldots v_{n-1}$ the
  corresponding ordering of the vertices.
  I.e., $d(v_i) = d_i \geq d_{i+1}$.

We want to reduce the problem of proving a lower bound on $f(d)$ to
showing a lower bound on $f_k(d) =mn -\sum_1^k d_i^2$, where $k =
k(n)$ is an appropriately chosen slowly growing function. Clearly
$f_k(d)-f(d)=\sum_{j=k+1}^{n-1} d_j^2$. But $d_j\le \frac{2m}{j}$ for
all $j$ whence $\sum_{j=k+1}^{n-1} d_j^2 \le 4m^2
\sum_{j=k+1}^{\infty}\frac{1}{j^2}<\frac{4m^2}{k}$, i.e, $f_k(d) \leq
f(d) +\frac{4m^2}{k}$. Since $m<\frac 94 n$, it suffices to show that
$f_k(d) \geq \frac{3}{4} n^2 - o(n^2)$ for an arbitrary
$k=\omega_n(1)$, which is our next goal.

We first note that.
\begin{claim}\label{cl:1993}
  \label{cor:21} For every $k =o(\log n)$,
$f_k(d) \geq mn - d_1^2 - (m - n/2 - d_1)\cdot d_2 - o(n^2)$.
\end{claim}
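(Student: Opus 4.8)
The plan is to get the bound by controlling $\sum_{i=1}^k d_i^2$ from above, since by definition $f_k(d) = mn - \sum_{i=1}^k d_i^2$. First I would peel off the top term, $\sum_{i=1}^k d_i^2 = d_1^2 + \sum_{i=2}^k d_i^2$, and then exploit monotonicity of the degree sequence: since $d_i \le d_2$ for every $i \ge 2$, we have $\sum_{i=2}^k d_i^2 \le d_2 \sum_{i=2}^k d_i$. This reduces the whole estimate to a bound on the \emph{linear} quantity $\sum_{i=2}^k d_i$, which is exactly what Claim~\ref{cl:171} supplies.

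Concretely, Claim~\ref{cl:171} applied with this $k$ gives $\sum_{i=1}^k d_i \le m - \frac n2 + \frac{k^2}{2} + 2^k$, hence $\sum_{i=2}^k d_i \le m - \frac n2 - d_1 + \frac{k^2}{2} + 2^k$. Multiplying through by $d_2 \ge 0$ (which preserves the inequality, so there is no need for a case analysis on the sign of $m - \frac n2 - d_1$) yields
$$\sum_{i=2}^k d_i^2 \;\le\; d_2\Bigl(m - \frac n2 - d_1\Bigr) \,+\, d_2\Bigl(\frac{k^2}{2} + 2^k\Bigr).$$
Substituting into $f_k(d) = mn - d_1^2 - \sum_{i=2}^k d_i^2$ gives precisely
$$f_k(d) \;\ge\; mn - d_1^2 - \Bigl(m - \frac n2 - d_1\Bigr)d_2 \,-\, d_2\Bigl(\frac{k^2}{2} + 2^k\Bigr),$$
so it only remains to verify that the error term $d_2\bigl(\frac{k^2}{2} + 2^k\bigr)$ is $o(n^2)$.

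For this last point I would just note that $d_2 \le n-2 < n$, and that the hypothesis $k = o(\log n)$ is tailored exactly so that $2^k = n^{o(1)} = o(n)$ while $k^2 = o(\log^2 n) = o(n)$; hence $d_2\bigl(\frac{k^2}{2} + 2^k\bigr) = o(n^2)$, which finishes the proof. I do not expect any real obstacle here: the argument is a two-line manipulation plus an invocation of Claim~\ref{cl:171}. The only thing to be careful about is the asymptotic bookkeeping at the end — ensuring Claim~\ref{cl:171} is applied with the same slowly growing $k$, and that it is the rate $k = o(\log n)$ (rather than something faster) that keeps $2^k$ from overwhelming the $n^2$ scale.
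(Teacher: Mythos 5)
Your proof is correct and follows essentially the same route as the paper: peel off $d_1^2$, bound $\sum_{i=2}^k d_i^2 \le d_2\sum_{i=2}^k d_i$ by monotonicity of the degree sequence (the paper calls this step ``convexity''), invoke Claim~\ref{cl:171} to control the linear sum, and absorb $d_2\bigl(\tfrac{k^2}{2}+2^k\bigr)$ into the $o(n^2)$ error using $k=o(\log n)$. The remark that no sign-based case analysis is needed is accurate but a non-issue, since multiplying an inequality by the nonnegative quantity $d_2$ always preserves its direction.
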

\begin{proof}
  $f_k(d) =mn -\sum_1^k d_i^2\ge mn - d_1^2-(\sum_2^k d_i)\cdot d_2
  \geq mn - d_1^2 - (m - n/2 - d_1)\cdot d_2 - o(n^2)$, where the
  second step is by convexity, and the last
  step uses Claim~\ref{cl:171}.
\end{proof}

We now normalize everything in terms of $n$, namely, write
$$m = \gamma \cdot n, ~~~d_1 = x \cdot n, ~~~d_2 = y \cdot n.$$
$$
g(\gamma,x,y) :=\frac{1}{n^2} f_k - o(1) = \gamma - x^2 - \gamma \cdot y + \frac{y}{2} +  xy,
$$

The problem of minimizing $f_k$ subject to our assumptions on $\gamma$, $d_2 \leq d_1$, and Claims \ref{cl:185}, \ref{cl:12}, \ref{cl:171} becomes
\vspace{0.2in}

{\bf Optimization problem A}

Minimize $g(\gamma,x,y)$, subject to:
\begin{enumerate}
\item $1\le \gamma \le \frac 94.$
\item $0\le y\le x \le \min\left(\frac \gamma 2,1\right).$
\item $x+ y  \leq \gamma - \frac{1}{2}$.
\item $x+y\le\frac{1+\gamma}{2}.$
\end{enumerate}

This problem is answered in the following Theorem whose proof is in the appendix.

\begin{theorem}\label{thm:5}
The answer to Optmization problem A is $\min g(\gamma,x,y) = \frac{3}{4}$. The optimum
is attained in exactly two points $(\gamma=1, x=\frac{1}{2}, y = 0)$ and $(\gamma
= 2, x=1, y=\frac{1}{2})$.
\end{theorem}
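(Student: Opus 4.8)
The plan is to exploit the fact that $g$ is affine in $\gamma$. Writing
$g(\gamma,x,y)=(1-y)\gamma+\bigl(xy+\tfrac12 y-x^2\bigr)$
and noting that on the feasible region $1-y>0$ strictly (if $y=1$ then $x=1$ by (2), and then (3) forces $\gamma\ge\tfrac52$, contradicting (1)), for fixed $(x,y)$ the minimizing value of $\gamma$ is as small as (1)--(4) permit, namely
\[
\gamma=L(x,y):=\max\!\bigl(1,\ 2x,\ x+y+\tfrac12,\ 2(x+y)-1\bigr).
\]
So I would first substitute this in and reduce Problem A to minimizing the single function
$h(x,y):=(1-y)\,L(x,y)+xy+\tfrac12 y-x^2$
over the (compact) polygon $\Pi=\{(x,y):0\le y\le x\le 1,\ L(x,y)\le\tfrac94\}$, which is exactly the projection of the original feasible body to the $(x,y)$-plane.

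Next I would cut $\Pi$ into the four sub-polygons $\Pi_1,\dots,\Pi_4$ on which the maximum defining $L$ is attained by the first, second, third, resp. fourth term. On each $\Pi_j$ the function $L$ is affine, hence $h$ is a quadratic; a short computation gives $h=\tfrac12+x(1-x)+y(1-y)$ on $\Pi_3$ (where $L=x+y+\tfrac12$) and $h=-x^2-2y^2-xy+2x+\tfrac72 y-1$ on $\Pi_4$ (where $L=2(x+y)-1$), both of which are strictly concave, so the minimum over each of $\Pi_3,\Pi_4$ sits at a vertex; while on $\Pi_1$ (where $L\equiv 1$) and $\Pi_2$ (where $L=2x$) the function $h$ is affine in $y$ and, restricted to any edge of $\Pi$, concave or affine in the remaining parameter, so again the minimum over each of $\Pi_1,\Pi_2$ is attained at a vertex.

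It then remains to describe the explicit bounded polygons $\Pi_1,\dots,\Pi_4$ --- this requires also keeping track of the cap $L\le\tfrac94$, which is automatic on $\Pi_1,\Pi_2$, is forced by the other inequalities on $\Pi_3$, and is an honest extra constraint ($x+y\le\tfrac{13}{8}$) on $\Pi_4$ --- list their finitely many vertices, and evaluate $h$ at each. One finds that every such vertex has $h\ge\tfrac34$, with equality precisely at $(x,y)=(\tfrac12,0)$ and $(x,y)=(1,\tfrac12)$. Lifting back through $\gamma=L(x,y)$ returns the two optima $(\gamma,x,y)=(1,\tfrac12,0)$ and $(2,1,\tfrac12)$, each with $g=\tfrac34$.

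I expect the actual difficulty to be purely bookkeeping: getting the four regions and their vertex sets right. The one point deserving care is the claim that on $\Pi_1,\Pi_2$ --- where $h$ is an \emph{indefinite} quadratic, so that the minimum of a generic such function over a polygon need not be at a vertex --- the minimum nevertheless is at a vertex; this uses the ``affine in $y$, concave along every edge'' structure, together with the observation that the only edges on which the coefficient of $y$ (namely $x-\tfrac12$ on $\Pi_1$, resp. $\tfrac12-x$ on $\Pi_2$) vanishes collapse to the single point $(\tfrac12,0)$. Strict concavity on $\Pi_3,\Pi_4$ and this non-degeneracy on $\Pi_1,\Pi_2$ also yield that the two listed points are the \emph{only} minimizers.
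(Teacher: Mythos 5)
Your proposal is correct and takes a genuinely different reduction from the paper's. The paper exploits that $g$ is affine and decreasing in $y$ on the feasible set (since $\gamma - \tfrac12 - x \ge 0$), so for fixed $(\gamma,x)$ it sets $y$ to its largest admissible value, and then finishes with a case split on $\gamma\le 2$ versus $\gamma\ge 2$ and on the subrange of $x$. You instead exploit that $g$ is affine and strictly increasing in $\gamma$ (since $y<1$), substitute $\gamma = L(x,y)$, and get a piecewise-quadratic one-variable-eliminated problem on the projection polygon $\Pi$, which you minimize by vertex enumeration on the four cells of the subdivision. Both routes are sound; yours is more mechanical and makes it transparent \emph{why} there are exactly two optima (they are the two vertices of $\Pi$ that achieve $h=\tfrac34$, each lying in two adjacent cells), whereas the paper's route is shorter to write but more ad hoc. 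I checked your cell-by-cell formulas: on $\Pi_3$, $h = \tfrac12 + x(1-x)+y(1-y)$, and on $\Pi_4$, $h = -x^2-2y^2-xy+2x+\tfrac72y-1$, both negative-definite; on $\Pi_1$, $h=1-x^2+y(x-\tfrac12)$; on $\Pi_2$, $h=2x-x^2+y(\tfrac12-x)$, both indefinite. Your remedy for the indefinite cells is right and is the one genuinely delicate point: affine in $y$ forces the minimum onto the boundary, and a direct computation of the leading coefficient of $h$ restricted to each of the three edges of $\Pi_1$ (namely $\alpha(\beta-\alpha)\le 0$) and of $\Pi_2$ (namely $-\alpha(\alpha+\beta)\le 0$) shows concavity or affineness along each, so the minimum is at a vertex. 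One small slip in phrasing: you wrote ``restricted to any edge of $\Pi$,'' but what the argument actually needs (and what is true) is concavity along every edge of $\Pi_1$ and of $\Pi_2$ --- these include internal boundaries of $\Pi$, e.g.\ the segment $x+y=\tfrac12$ between $\Pi_1$ and $\Pi_3$, and the segment $y=x-\tfrac12$ between $\Pi_2$ and $\Pi_3$, both of which are indeed concave. Finally, the vertex lists are $\Pi_1:\{(0,0),(\tfrac12,0),(\tfrac14,\tfrac14)\}$, $\Pi_2:\{(\tfrac12,0),(1,0),(1,\tfrac12)\}$, $\Pi_3:\{(\tfrac12,0),(\tfrac14,\tfrac14),(\tfrac34,\tfrac34),(1,\tfrac12)\}$, $\Pi_4:\{(1,\tfrac12),(\tfrac34,\tfrac34),(\tfrac{13}{16},\tfrac{13}{16}),(1,\tfrac58)\}$, and evaluating $h$ confirms $h\ge\tfrac34$ with equality only at $(\tfrac12,0)$ and $(1,\tfrac12)$, which lift to $(\gamma,x,y)=(1,\tfrac12,0)$ and $(2,1,\tfrac12)$ as claimed.
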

Plugging the optimal values on $\gamma, x,y$ back into Claim
\ref{cl:1993} completes the proof of the Theorem.
\end{proof}

\begin{proof}[ Proof of Theorem \ref{thm:main}]
Let us recall some of the facts proved so far concerning the largest $n$-vertex $2$-hypercut $C$. Pick an arbitrary vertex $v$. Since $C$ is a coboundary, it can be generated by an $n$-vertex graph which consists of the isolated vertex $v$, and  $G=\link_v(C)$, an $(n-1)$-vertex $\Lambda$-connected graph. Similarly, $\bar C$ can be generated by the disjoint union of ${v}$ and $\bar G$. As we saw, there exists some ${v}$ for which the corresponding $\bar{G}$ satisfies either

$$ {\bf CASE ~(I)}:~~~~~~~m = n-1+o(n), ~d_1 =\frac n2 \pm o(n)\text{~and~} d_2 = o(n),$$
or
$$ {\bf CASE ~(II)}:~~~~~~m = 2n \pm o(n), ~d_1 =n - o(n), ~d_2 =\frac n2 \pm o(n) \text{~and~}d_3 = o(n).$$

where, as before, $m=|E(\bar G)|$, $d_1\ge d_2 \ge \dots \ge d_{n-1}$ is the degree sequence of $\bar G$, with $d_i=d(v_i)$. We denote by $t$ the number of triangles in $\bar G$. Since $C$ is the largest cut, the graph $\bar G$ attains the minimum of $f(\bar G)=nm-\sum d_i^2+4t$ among all graphs whose complement is $\Lambda$-connected.

We now turn to further analyse the structure of $\bar G$, in {\bf CASE (I)}.
\begin{lemma}
\label{lm:str_barG_caseI}
Suppose that $\bar G$ satisfies~ {\bf CASE (I)} and let $H=\bar G\setminus v_1$. Then $H$ is either (i) A perfect matching, or (ii) A perfect matching plus an isolated vertex, or (iii) A perfect matching plus an isolated vertex and a 3-vertex path.
\end{lemma}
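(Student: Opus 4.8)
The plan is to combine the global optimality of $\bar G$ (it minimises $f(\bar G)=nm-\sum d_i^2+4t$ over graphs with $\Lambda$-connected complement) with the structural constraints already extracted for \textbf{CASE (I)}, namely $m=n-1+o(n)$, $d_1=\frac n2\pm o(n)$, $d_2=o(n)$. Write $H=\bar G\setminus v_1$ and let $A=N_{\bar G}(v_1)$, so $|A|=d_1=\frac n2\pm o(n)$. The first step is to show that, up to $o(n)$ exceptional vertices, every vertex of $H$ has degree exactly $1$ in $H$: indeed $d_2=o(n)$ forces $\Delta(H)=o(n)$, and $\sum_{u\in H}d_H(u)=2(m-d_1)=n-o(n)$ while $H$ has $n-2$ vertices, so the average $H$-degree is $1-o(1)$; combined with the fact that $H$ has no isolated-looking excess this pins down that all but $o(n)$ vertices have $d_H=1$. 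The key point I would push next is that one can in fact do away with the $o(n)$ slack entirely by an exchange/optimality argument: if $H$ contained, say, two vertices of degree $\ge 2$, or more than a bounded number of degree-$0$ vertices, or a long path component, one could locally modify $\bar G$ (move an edge, or split a vertex) so as to strictly decrease $f$ while keeping the complement $\Lambda$-connected, contradicting optimality. This is where I expect the real work to be.

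Concretely, I would set up the following exchange moves and check each keeps $\bar C$'s link $\Lambda$-connected (using Observation~\ref{obs:1991} / Proposition~\ref{prop:2cut}) and strictly lowers $f$. (a) If $u$ is an isolated vertex of $H$ and $H$ has a component with $\ge 2$ edges (so in particular a vertex $w$ of $H$-degree $\ge 2$), rerouting one edge at $w$ to land on $u$ changes $nm$ by $0$, decreases $\sum d_i^2$ (convexity: it smooths the degree sequence) and does not increase $t$, so $f$ drops — hence at most one ``extra'' structure can survive, i.e. $H$ is a matching together with at most one nontrivial small piece. (b) Among small nontrivial pieces, a careful accounting of the $\sum d_i^2$ and $4t$ terms shows a single $3$-vertex path is the \emph{only} deficiency a matching can carry without being improvable: a $3$-path has no triangle and contributes $1^2+2^2+1^2=6$ to $\sum d_i^2$ on three vertices versus $1+1+0$ for a matching-edge-plus-isolated-vertex on the same three vertices, and one must show the $nm$ bookkeeping (the parity/count of edges is forced by $m$ and by $X_n$ having a full skeleton, so $|E(\bar G)|$ is what it is) makes the $3$-path strictly better than any longer path, any cycle, any star $K_{1,r}$ with $r\ge 2$, or two separate extra edges. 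That finitely many cases is the calculation I would relegate to a short case check rather than write out here. (c) Finally, parity: since $n-2$ vertices must be covered, a perfect matching needs $n-2$ even; when $n-2$ is odd one isolated vertex is forced, and when moreover that leaves the ``wrong residue'' for what $m$ demands, the $3$-path appears. Matching this residue analysis to the three listed alternatives (i), (ii), (iii) closes the lemma.

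The main obstacle, as flagged, is part (b): ruling out \emph{all} alternative small components by the optimality of $f$. The delicate point is that $f=nm-\sum d_i^2+4t$ has a large leading term $nm$ which is essentially fixed once $m$ is (so it plays no role in comparing two graphs on the same vertex and edge counts), meaning the comparison is genuinely between $-\sum d_i^2+4t$ for competing local configurations; one has to be sure no exotic bounded-size gadget beats the matching-plus-$3$-path, and simultaneously that the modification never destroys $\Lambda$-connectivity of $\link_v(C)$ — the latter needs the clone-freeness and connectivity dichotomy of Observation~\ref{obs:1991} applied after the move. I would organise this as: (1) degree/edge-count normalisation from \textbf{CASE (I)}; (2) a lemma that $H$ is a disjoint union of paths and isolated vertices (no cycles, no vertex of degree $\ge 3$) via the exchange move; (3) a lemma that at most one path of length $\ge 2$ survives and it has exactly two edges; (4) parity bookkeeping to land on exactly (i)–(iii).
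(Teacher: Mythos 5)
Your exchange argument has the sign backwards. We are \emph{minimizing} $f(\bar G) = nm - \sum_v d_v^2 + 4t$, so for fixed $m$ and $t$, decreasing $\sum_v d_v^2$ makes $f$ \emph{larger}, not smaller. Move~(a) reroutes an edge to smooth the degree sequence; by convexity this decreases $\sum_v d_v^2$ and (you claim) does not increase $t$ --- so it increases $f$. The extremal $\bar G$ in fact favours a highly skewed degree sequence (one dominant vertex $v_1$ of degree $\frac n2\pm o(n)$, everybody else of degree $1$ or $2$), and degree-smoothing moves are disimprovements. This error propagates into step~(b): you compare the $P_3$ against the matching-edge-plus-isolated-vertex via $\sum d_v^2 = 6$ vs.~$2$ as though the former were ``better,'' but for fixed $m$ the opposite is true, and once you also account for the extra edge in $P_3$ (costing an additional $n$ in the $nm$ term) the matching edge plus isolated vertex is strictly cheaper in $f$. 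The $P_3$ in alternative~(iii) is not there because it beats a matching edge; it is a residual configuration forced by $\Lambda$-connectivity (the ``triangle plus isolated vertex'' case after the reductions), not by the residue of $m$.

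The paper's modifications are structurally different: they do \emph{not} hold $m$ fixed. A connected component $U$ of $H$ on $|U|=2u\ge 4$ vertices, with $r$ neighbours of $v_1$ and $\beta$ surplus internal edges, is replaced in one step by a perfect matching on $U$ with $v_1$ attached to exactly one endpoint of each matching edge. This shrinks $m$ by $\beta+r-1\ge 0$, so the dominant term $nm$ drops by $\Theta(n)$ per surplus edge; the corresponding changes in $d_1$ and in $\sum_{i\in U} d_i^2$ are controlled using $|U|=o(n)$ and $d_1=\frac n2\pm o(n)$, giving a strict net decrease in $f$. The paper also applies such moves iteratively and only asks that the \emph{final} graph be $\Lambda$-connected; it does not preserve $\Lambda$-connectivity at every intermediate step, which is a much lighter burden than the one you set yourself. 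A fixed-$m$ rerouting scheme simply cannot access the $nm$ savings that drive the argument, so a corrected proof would have to be rebuilt around edge-deleting moves of the paper's type rather than repaired locally.
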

\begin{proof}
The proof proceeds as follows: for every $H$ other than the above, we find a local variant $\bar G_1$ of $\bar G$ with $f(\bar G_1)< f(\bar G)$. We then likewise modify $G_1$ to $G_2$ etc., until for some $k\ge 1$ the graph $G_k$ is $\Lambda$-connected. The process proceeds as follows.

For every connected component $U$ of $H$ of even size $|U|\ge 4$, we replace $H|_U$ with a perfect matching on $U$, and connect $v_1$ to one vertex in each of these $\frac{|U|}{2}$ edges. {\em Now all connected components of $H$ are either an edge or have an odd size}.

Consider now odd-size components.  Note that $H$ can have at most one isolated vertex. Otherwise $\bar G$ is disconnected or it has clones, so that $G$ is not $\Lambda$-connected.
As long as $H$ has two odd connected components which together have $6$ vertices or more, we replace this subgraph with a perfect matching on the same vertex set, and connect $v_1$ to one vertex in each of these edges. If the remaining odd connected components are a triangle and an isolated vertex, remove one edge from the triangle, and connect $v_1$ only to one endpoint of the obtained $3$-vertex path. In the last remaining case $H$ has at most one odd connected component $U$.

If no odd connected components remain or if $|U|=1$, we are done.

In the last remaining case $H$ has a single odd connected component of order $|U|\ge 3$. We replace $H|_U$ with a matching of $(|U|-1)/2$ edges, connect $v_1$ to one vertex in each edge of the matching and to the isolated vertex. If, in addition, there is a connected component of order $2$ with both vertices adjacent to $v_1$ (Note that by the proof of Claim \ref{cl:167} there is at most one such component.), we remove as well one edge between $v_1$ and this component.

All these steps strictly decrease $f$. We show this for the first kind of steps. The other cases are nearly identical.

Recall that $|E(H)|=\frac n2\pm o(n)$ and that $H$ has at most one isolated vertex. Therefore every connected component in $H$ has only $o(n)$ vertices. Let $U$ be a connected component with $2u\ge 4$ vertices of which $0<r\le 2u$ are neighbours of $v_1$, and let $\beta = |E(H|_U)| - (2u-1)\ge 0$. Let $\bar G'$ be the graph after the aforementioned modification w.r.t. $U$. We denote its number of edges and triangles by $m'$ and $t'$ resp., and its degree sequence by $d_i'$. Then,
\begin{eqnarray*}
f(\bar G)-f(\bar G') = n(m-m')-\sum_i(d_i^2-d_i'^2)+4(t-t') \ge \\
n(\beta+r-1)-\left(d_1^2-(d_1-r+u)^2\right)-\sum_{i\in U}d_i^2 \ge \\
n(\beta+r-1)+\left(u-r\right)(2d_1+u-r)-2u(4u-2+2\beta+r).
\end{eqnarray*}
In the second row we use $t\ge t'$, which is true since the modification on $U$ creates no new triangles. In the third row we use $\sum_{i\in U}d_i^2\le(\max_{i\in U}d_i)\left(\sum_{i\in U}d_i\right).$

Let us express $d_1=\frac{n-w}{2}$ where $w=o(n)$. What remains to prove is that
\begin{eqnarray*}
n(\beta+u-1)+(u-r)(u-r-w)\ge 2u(4u-2+2\beta+r).
\end{eqnarray*}
Or, after some simple manipulation, and using the fact $r \leq 2u$, that
\[
\beta n+(u-1)n\ge 4\beta u+u(7u+w+3r-4).
\]
This is indeed so since $u=o(n)$ implies that $\beta n\gg\beta u$ and $2\le u\le o(n)$ implies $(u-1)n\gg u(7u+w+4r-4)$.

The other cases are done very similarly, with only minor changes in the parameters. In the case of two odd connected components which together have $2u\ge 6$ vertices, in the final step the main term is $(\beta + u -2)n \ge n+\beta u$ since $u>2$. In the case of changing a triangle to a 3-vertex path the main term in the final inequality is $(\beta+u-1)n=n$.

\end{proof}

The structure of $\bar G$ for~ {\bf CASE (I)} is almost completely determined by Lemma \ref{lm:str_barG_caseI}. Since $G$ is $\Lambda$-connected, in $\bar G$ $v_1$ must have a neighbour in each component of $H$, and can be fully connected to at most one component. In addition, if $P$ is a 3-vertex path in $H$, then $v_1$ has exactly one neighbour in $P$ which is an endpoint. Otherwise we get clones. Therefore the only possible graphs are those that appear in Figure 1. The first row of the figure applies to odd $n$, where the optimal $\bar G$ satisfies $f(\bar G)=\frac 34n^2-4n+\frac{25}{4}$. The other rows correspond to $n$ even, with four optimal graphs that satisfy $f(\bar G)=\frac 34n^2-\frac 72n+4$.

\begin{figure}[h!]
\label{fig:caseI}
  \centering
    \includegraphics[width=1\textwidth]{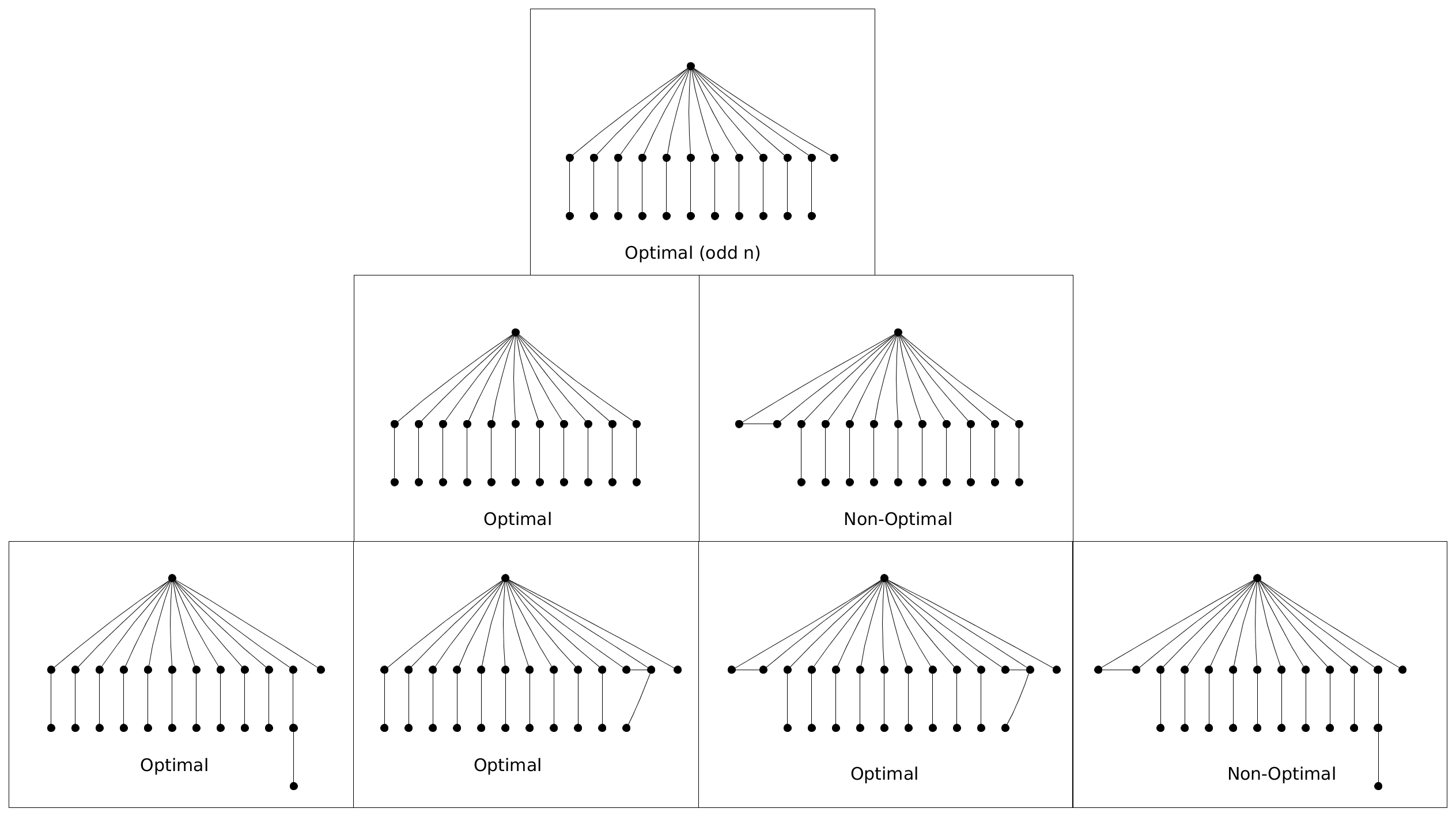}
    \caption{The graphs $\bar G$ that are considered in the final stage of the proof of~ {\bf CASE (I)}. The first row refers to the only possibility for odd $n$. The second row to even $n$, where $H$ is a perfect matching. The third row refers to even $n$, where $H$ a disjoint union of an isolated vertex, 3-path and a matching.}
\end{figure}

This concludes~ {\bf CASE (I)}, and we now turn to {\bf Case (II)}. Our goal here is to reduce this back to {\bf CASE (I)}, and this is done as follows.

\begin{claim}
\label{clm::redII2I}
Let $\bar G = \link_v(\bar{C})$ be a graph on $n-1$ vertices with  parameters as in {\bf CASE (II)}. If $H=\bar G\setminus\{v_1,v_2\}$ has an isolated vertex $\rm{z}$ that is adjacent in $\bar G$ to $v_1$ then $f(\bar G)$ is bounded by the extremal examples found in {\bf CASE (I)}.
\end{claim}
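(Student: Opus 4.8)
The plan is to perform a surgery on $\bar G$ that "absorbs" the vertex $v_1$ together with the isolated vertex $\mathrm{z}$ of $H$, producing a new graph $\bar G'$ which (a) still has $\Lambda$-connected complement, (b) falls under {\bf CASE (I)} up to the relabeling of $v_2$ as the new top-degree vertex, and (c) satisfies $f(\bar G') \le f(\bar G)$. Granting this, the extremal value of $f$ in {\bf CASE (II)} is bounded below by the extremal value computed in {\bf CASE (I)}, which is exactly what the claim asserts. First I would set up notation: write $d_1 = d(v_1) = n - o(n)$, $d_2 = d(v_2) = \tfrac n2 \pm o(n)$, and recall $m = |E(\bar G)| = 2n \pm o(n)$, so that $v_1$ is adjacent to all but $o(n)$ of the vertices, while the remaining graph $H = \bar G \setminus \{v_1, v_2\}$ has only $o(n)$ edges outside the star at $v_1$ and hence (as in Lemma \ref{lm:str_barG_caseI}) consists of tiny components, at most one of which is an isolated vertex — here the hypothesis hands us such a $\mathrm z$ with $\mathrm z v_1 \in E(\bar G)$.

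The surgery itself: delete $v_1$ from $\bar G$ and re-attach all of its non-$\{v_2,\mathrm z\}$ neighbours to $\mathrm z$ instead, i.e. build $\bar G'$ on the same vertex set whose top-degree vertex is now $\mathrm z$ with $N_{\bar G'}(\mathrm z) \supseteq N_{\bar G}(v_1)\setminus\{\mathrm z\}$, while $v_1$ inherits (roughly) the small neighbourhood that $\mathrm z$ used to have. I would choose the re-attachment so that the degree of $v_2$ is essentially unchanged (staying at $\tfrac n2 \pm o(n)$), no new triangles are created beyond those already present (the star at $\mathrm z$ is triangle-free just as the star at $v_1$ was, since $H$ has $o(n)$ edges), and the resulting $H' = \bar G' \setminus \{\mathrm z, v_2\}$ is a disjoint union of a perfect matching, possibly an isolated vertex, and possibly a $3$-path — exactly the {\bf CASE (I)} structure from Lemma \ref{lm:str_barG_caseI}. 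The key point for preserving $\Lambda$-connectivity is Proposition \ref{prop:2cut}: one checks, as in the proof of Lemma \ref{lm:str_barG_caseI}, that the complement link at the new configuration is still $\Lambda$-connected, using that $\mathrm z$ now plays the role $v_1$ played and that $v_2$'s adjacencies to the small components are arranged to have a neighbour in each component and be fully inside at most one.

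For the inequality $f(\bar G') \le f(\bar G)$ I would write $f(\bar G) - f(\bar G') = n(m - m') - \sum_i (d_i^2 - d_i'^2) + 4(t - t')$ and track the three contributions separately, exactly in the style of the displayed computation inside the proof of Lemma \ref{lm:str_barG_caseI}: the edge count changes by $O(n)$ at most, the triangle term is favourable since we create no new triangles, and the dominant $\sum d_i^2$ term is controlled because moving $\Theta(n)$ edges from the star at $v_1$ to the star at $\mathrm z$ replaces $d_1^2 \approx n^2$ by $d(\mathrm z)^2 \approx n^2$ plus a much smaller correction, while every other degree moves by $o(n)$; the quadratic in $n$ cancels and the linear-in-$n$ remainder has the right sign. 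The main obstacle I anticipate is purely bookkeeping: making the re-attachment precise enough that after the surgery the small-component structure of $H'$ is genuinely one of the three {\bf CASE (I)} types (so that Lemma \ref{lm:str_barG_caseI} and Figure 1 apply verbatim) while simultaneously keeping $v_2$'s degree pinned and $\Lambda$-connectivity intact — balancing these three constraints at once is where the care is needed, whereas the $f$-comparison is a routine variant of an estimate already carried out above.
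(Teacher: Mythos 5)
Your proposal misses the key algebraic observation that makes this claim a one-liner, and as a result you take on substantial extra work that you only sketch. The paper's proof does not perform any surgery on the hypercut: it simply XORs $\bar G$ with the full star $S$ centered at $v_1$. Since every $3$-subset of the vertex set meets a full star in an even number of edges ($0$ or $2$), the graph $F := \bar G \oplus S$ generates \emph{exactly the same} coboundary $\bar C$. Moreover, $\mathrm{z}$ is isolated in $F$ (its only $\bar G$-neighbour was $v_1$), so $F = \link_{\mathrm z}(\bar C)$, i.e.\ $F$ is just the link of the \emph{same} cut at a different vertex. Two things then come for free: the $\Lambda$-connectivity of $\bar F$ is automatic by Proposition~\ref{prop:2cut} (it equals $\link_{\mathrm z}(C)$ and $C$ is a cut), and $f(F) = |\bar C| = f(\bar G)$ holds with \emph{equality}, not just $\le$, by Observation~\ref{obs:zero} applied to any generating link. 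One then only needs the trivial degree count: $\deg_F(v_1) = n-1 - \deg_{\bar G}(v_1) = o(n)$, $\deg_F(v_2) = \tfrac n2 \pm o(n)$, all other degrees $o(n)$ — so $F$ falls under {\bf CASE~(I)} and the conclusion follows.

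Your surgery approach has genuine gaps. First, the re-attachment is not specified precisely enough to produce a well-defined graph — you describe a target degree sequence and structure but not a concrete edge set — and the three competing constraints you yourself flag (preserving $\Lambda$-connectivity of the complement, pinning $\deg(v_2)$, and forcing the tiny components of $H'$ into one of the three {\bf CASE~(I)} shapes) are exactly what needs to be proved, not hand-waved. Second, even if the surgery were nailed down, $\bar G'$ would generate a \emph{different} coboundary, so you would need a fresh $\Lambda$-connectivity argument (the paper avoids this entirely: $F$ is a link of the same cut) and a genuine proof of $f(\bar G') \le f(\bar G)$. That inequality is delicate because the dominant $\sum d_i^2$ term has $d_1^2 \approx n^2$ cancelling against $d(\mathrm z)^2 \approx n^2$, so the claimed "linear-in-$n$ remainder has the right sign" must be verified, not asserted. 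In short, the paper's proof exploits the fact that XOR with a star preserves the coboundary to turn the whole claim into a relabelling, while your route trades this symmetry for a surgery whose feasibility and monotonicity you do not establish.
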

\begin{proof}
Let $S$ be the star graph on vertex set $V\cup\{v\}$ with vertex $v_1$ in the center and $n-1$ leaves. Consider the graph $F := \bar{G} \xor S$ on the same vertex set, whose edge set is the symmetric difference of $E(S)$ and $E(\bar G)$. Since every triplet meets $S$ in an even number of edges, the coboundary that $F$ generates equals to the coboudnary that $\bar G$ generates, which is  $\bar C$.

In addition, $\rm{z}$ is an isolated vertex in $F$ since its only neighbor in $\bar G$ is $v_1$. Consequently, $F  = \link_{\rm z}(\bar{C})$, and the claim will follow by showing that $F$ agrees with the conditions of {\bf CASE (I)}.
Indeed, $\mbox{deg}_F(v_1) =
n-1 - \mbox{deg}_{\bar{G}}(v_1) = o(n)$, and $|\mbox{deg}_F(u)-\mbox{deg}_{\bar G}(u)|\le 1$ for every other vertex $u$.  Hence, $\mbox{deg}_F(v_2) =
\frac{n}{2} \pm o(n)$, and $\mbox{deg}_F(u) =o(n)$ for every other vertex $u$.
\end{proof}

If $H$ has no isolated vertex that is adjacent in $\bar G$ to $v_1$, we show how to modify $\bar G$ to a graph $\bar G_1$, such that (i) $G_1$ is $\Lambda$-connected, (ii) $\bar G_1\setminus\{v_1,v_2\}$ has an isolated vertex which is adjacent to $v_1$ in $\bar G_1$ , and (iii) $f(\bar G_1)<f(\bar G)$.

Since $G$ is $\Lambda$-connected and using the proof of claim \ref{cl:167}, $H$ has at most one connected component $U_1$ in $H$ where all vertices are adjacent to $v_1$ and not to $v_2$ in $\bar G$. Similarly, it has at most one connected component $U_2$ where all vertices are adjacent to both $v_1$ and $v_2$. Also, since $d_1=n-o(n)$, $d_2=\frac n2 \pm o(n)$ and $H$ has at most 3 isolated vertices, there exists an edge $xy \in E(H)$ such that $xv_1, xv_2, yv_1\in E(\bar{G})$, but $yv_2\not\in E(\bar{G})$.

$G_1$ is constructed as following:
\begin{enumerate}
\item If neither components $U_1$ nor $U_2$ exist, remove the edge $xy$ and the edge $v_1v_2$, if it exists. Otherwise, let $r:=|U_1\cup U_2|$.
\item If $r$ is even, replace it in $H$ with a perfect matching on $u-2$ vertices and two isolated vertices. Connect $v_1$ to every vertex in $U_1\cup U_2$. Make $v_2$ a neighbor of one of the isolated vertices, and one vertex in each of the edges of the matching. Additionally, remove the edge $v_1v_2$ if it exists.
\item If $u$ is odd, replace it in $H$ by a perfect matching on $u-1$ vertices and one isolated vertex. Connect $v_1$ to every vertex in $U_1\cup U_2$, and $v_2$ to one vertex in each edge of the matching.
\end{enumerate}

The fact that value of $f$ decreased is shown similarly to the
calculation in {\bf CASE (I)}. 
\end{proof}

\section{Large $d$-hypercuts over $\f_2$ in even dimensions}
\label{sec:f2_even_dim}
In this section we consider large $d$-dimensional hypercuts over $\f_2$ for $d>2$. We show that for $d$ even the largest $d$-hypercuts have ${n\choose d+1}\left(1-o_n(1)\right)$ $d$-faces. In contrast, for odd $d$ we observe that the density of every $d$-hypercut is bounded away from $1$.
\begin{theorem}\label{thm:f2_even_dim}
For every $d\ge 2$ even there exists an $n$-vertex $d$-hypercut over $\f_2$ with ${n\choose d+1}\left(1-o_n(1)\right)$ $d$-faces.
\end{theorem}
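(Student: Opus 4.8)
The goal is to construct, for each even $d\ge 2$, an $n$-vertex $d$-hypercut over $\f_2$ whose $d$-faces are a $(1-o_n(1))$-fraction of all of $\binom{[n]}{d+1}$. Equivalently, by the dictionary between hypercuts and complements of almost-hypertrees, I want an $\f_2$-acyclic $d$-complex $X$ with a full $(d-1)$-skeleton whose shadow is {\em small} — of density $o_n(1)$ — and then show the coboundary it generates is in fact a hypercut. My plan is to build $X$ directly as a {\em generated} coboundary: fix a set $A$ of $(d-1)$-faces, let $B$ be the $d$-coboundary it generates (the $d$-faces whose boundary meets $A$ in an odd number of $(d-1)$-faces), and arrange that $\bar B$ — the $d$-faces meeting $A$ evenly — is small, i.e. $|\bar B| = o_n(1)\binom{n}{d+1}$. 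Then $\bar B$ is (essentially) the desired almost-hypertree and $B$ is the large hypercut, modulo two checks: that $\bar B$ is acyclic (equivalently $|B| = \binom{n}{d+1}-\binom{n-1}{d}+\text{co-rank}$, so $B$ has co-rank $1$, i.e. $B$ is a {\em simple} coboundary), and the simplicity itself.

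\textbf{Choosing $A$.} The natural choice, generalizing the ``star'' idea used for $d=2$, is to take $A$ to be a $(d-1)$-dimensional object with only $o_n(1)\binom{n}{d}$ faces such that {\em most} $d$-faces $\sigma$ contain an odd number of $(d-1)$-subfaces of $A$; a $d$-face $\sigma = \{v_0,\dots,v_d\}$ has $d+1$ facets, and since $d$ is even, $d+1$ is odd, so if {\em every} facet of $\sigma$ lay in $A$ we would already be in $B$. More usefully: I expect to take $A$ to be the set of all $(d-1)$-faces {\em avoiding} a fixed small set, or the $(d-1)$-faces of a random (or explicit pseudorandom) sparse complex, and compute the parity $|\partial\sigma \cap A| \bmod 2$. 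The cleanest route is probably a {\em random} $A$: include each $(d-1)$-face of $K_n^{d-1}$ independently in $A$ with probability $p=p(n)$ chosen so that $\binom{n}{d}p = o(\binom{n}{d+1})$ but $p$ is still large enough — say $p$ a small constant or slowly decaying — to force, for a typical $\sigma$, that the sum of $d+1$ i.i.d. Bernoulli$(p)$ variables is odd with probability close to $1/2$, hence $|\bar B|$ concentrated around $\le \binom{n}{d+1}/2$, but crucially with the bad $d$-faces being those with an {\em even} facet-count, and one wants this {\em even} fraction to be $o(1)$. That pushes $p$ close to $1/2$ from below in a way that conflicts with sparsity of $A$ — so pure randomness with uniform $p$ will not give $o_n(1)$; instead I expect the real construction to be more structured.

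\textbf{The structured construction I would actually pursue.} Here is the approach I think works. Write $d=2e$. For a $d$-face $\sigma=\{v_0<\dots<v_d\}$, consider the ``link-type'' partition used earlier (Proposition~\ref{prop:2cut}): a $d$-coboundary $B$ is generated, for any fixed vertex $v$, by the $(d-1)$-complex $\link_v(B)$, and $B$ is a hypercut iff $\link_v(B)$ is $\Lambda$-connected for every $v$ (the $d=2$ statement; the analogous ``$(d-1)$-dimensional $\Lambda$-connectivity / no-coclones'' statement is the general fact from \cite{V-con}). So I would build $\link_v(B)$ to be a $(d-1)$-complex on $[n]\setminus\{v\}$ with {\em very few} faces (so that $B=\link_v(B)\cdot\partial_d$ still has co-rank $1$) yet $(d-1)$-$\Lambda$-connected. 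For instance, take $\link_v(B)$ to be the $(d-1)$-skeleton-minus-something of a {\em cone}: a single apex $u$ joined to a sparse acyclic $(d-2)$-complex, plus a small connective gadget ensuring $\Lambda$-connectivity. The point of $d$ even: in the expansion $B = \link_v(B)\cdot\partial_d$ the cancellation pattern over $\f_2$ behaves so that a $d$-face is {\em outside} $B$ only under a rigid local condition satisfied by few faces — this is exactly where parity of $d$ enters, and it is the crux.

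\textbf{Main obstacle.} The hard part is twofold and genuinely where the work lies: (1) simplicity — showing the coboundary $B$ is {\em simple}, i.e. that $\bar B$ contains an almost-hypertree and has the right dimension, so $B$ is a hypercut and not merely a large coboundary; for this I would verify via Proposition~\ref{prop:2cut} for $d=2$ and its higher-dimensional analogue (every link is $\Lambda$-connected) — establishing $\Lambda$-connectivity of an extremely sparse link is the delicate combinatorial step, since sparsity fights connectivity; and (2) getting the shadow density down to a true $o_n(1)$ rather than a constant — this requires the generating object $A$ (or $\link_v(B)$) to have size $o_n(1)\binom{n}{d}$, which forces an {\em iterated} or {\em recursive} construction (building the sparse connective gadget on a lower-dimensional version of the same problem), and carefully tracking that each layer adds only $o_n(1)$ relative density. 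I expect the final construction to be recursive in $d$ (or in $n$), with the even-dimensional hypothesis invoked at the base case $d=2$ and preserved because $d\mapsto d-2$ keeps parity — and the bookkeeping that the accumulated shadow stays $o_n(1)$ is the main calculation.
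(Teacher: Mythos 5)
Your proposal dismisses the random construction prematurely, and this is where the paper actually does the work. You analyse the parity of a Binomial$(d+1,p)$ variable only in the regime $p\le 1/2$, concluding that the fraction of $d$-faces meeting $A$ an odd number of times is at most $1/2$. But since $d$ is even, $d+1$ is odd, and therefore $(1-2p)^{d+1}\to -1$ as $p\to 1$, so $\Pr[\text{odd}]=\tfrac12\bigl(1-(1-2p)^{d+1}\bigr)\to 1$. This is precisely the role of the even-dimensionality hypothesis, and it is what your analysis misses. The paper takes $\link_v(C)$ to be a \emph{dense} random $(d-1)$-complex $K$ in which each $(d-1)$-face on $[n]\setminus\{v\}$ appears independently with probability $p=1-n^{-1/(3d-3)}$; then almost every $d$-face $\sigma$ has all $d+1$ (respectively, the one relevant) facets in $K$, hence lies in $C=K\cdot\partial_d$, giving density $1-o_n(1)$ directly.

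Your proposed ``structured'' alternative goes in exactly the opposite direction and cannot work as stated. If $\link_v(B)$ is sparse (size $o_n(1)\binom{n}{d}$) then by double counting the average of $|\partial\sigma\cap A|$ over $d$-faces $\sigma$ is $o(1)$, so the overwhelming majority of $\sigma$ meet $A$ in $0$ facets, which is even. A sparse generating link therefore produces a \emph{sparse} coboundary $B$, not a dense one, so $\bar B$ cannot have density $o_n(1)$. Also, the number of faces of $\link_v(B)$ has no bearing on the ``co-rank'' of $B$: any nonempty generating set gives a nonzero coboundary, and minimality is governed by $\Lambda$-connectivity, not by the generator's size. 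Finally, the hard step the paper actually has to handle is the reverse of what you fear: one must prove $\Lambda$-connectivity of a \emph{dense} random complex (dense complexes have fewer $\Lambda$-adjacencies, not more), and the choice $1-p=n^{-1/(3d-3)}$ is tuned so that for every pair $\tau,\tau'$ there are $\approx n(1-p)^{2d-2}=n^{1/3}$ candidate vertices $x$ realising the required $\Lambda$-step, after which a union bound over pairs of $(d-1)$-faces finishes the argument. No recursion on $d$ is used, and the sufficient condition ``$\link_v(C)$ is $\Lambda$-connected $\Rightarrow$ $C$ is a hypercut'' is proved directly in all dimensions in the paper rather than quoted from \cite{V-con}.
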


Before we prove the theorem, let us explain why the situation is different in odd dimensions. Recall that Turan's problem (e.g., ~\cite{keevash_survey}) asks for the largest density $ex(n,K_{d+2}^{d+1})$ of a $(d+1)$-uniform hypergraph that does not contain all the hyperedges on any set of $(d+2)$ vertices. For $d$ odd a hypercut has this property, because (see Section \ref{section:combin}), the characteristic vector $C\in\f_2^{n\choose d+1}$ of a $d$-hypercut is a coboundary, i.e., $C\cdot\partial_{d+1}=0$. A simple double-counting argument shows that the density of $C$ cannot exceed $1-\frac{1}{d+2}$, and in fact, a better upper bound of $1-\frac{1}{d+1}$ is known~\cite{sidorenko}. One of the known constructions for the Turan problem yields $d$-coboundaries with density $\frac 34 - \frac {1}{2^{d+1}} - o(1)$ for $d$ odd ~\cite{de_caen}. In particular for $d=3$ this gives a lower bound of $\frac {11}{16}=0.6875$. In YP's MSc thesis~\cite{yuval_msc} an upper bound of $0.6917$ was found using flag algebras.

We now turn to prove the theorem for some $d\ge 4$ even. As before, a $d$-hypercut $C$ is an inclusion-minimal set of $d$-faces whose characteristic vector is a coboundary. In addition, every $d$-coboundary $C$ and every vertex $v$ satisfy $C = \link_v(C)\cdot\partial_d$. Recall that $C$ is a $2$-hypercut iff $\link_v(C)$ is $\Lambda$-connected for some vertex $v$. In dimension $>2$ we do not have such a charaterization, but as we show below, an appropriate variant of the {\em sufficient} condition for being a hypercut does apply in all dimensions.

Let $\tau,\tau'$ be two $(d-1)$-faces in a $(d-1)$-complex $K$. We say that they are $\Lambda$-{\em adjacent} if their union $\sigma=\tau\cup\tau'$ has cardinality $d+1$, and $\tau,\tau'$ are the only $d$-dimensional subfaces of $\sigma$ in $K$. We say that $K$ is $\Lambda$-connected if the transitive closure of the $\Lambda$-adjacency relation has exactly one class.
\begin{claim}
Let $C$ be a $d$-dimensional coboundary such that the $(d-1)$-complex $K=\link_v(C)$ is $\Lambda$-connected for some vertex $v$. Then $C$ is a $d$-hypercut.
\end{claim}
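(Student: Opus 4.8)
The plan is to mimic the $d=2$ argument encoded in Proposition~\ref{prop:2cut}, using only the ``easy direction'' (that $\Lambda$-connectivity of the link is \emph{sufficient}). Recall that a coboundary $C$ is a hypercut precisely when it is a \emph{simple} coboundary, i.e.\ its support contains no nonempty proper sub-coboundary. So the goal is to show: if $K=\link_v(C)$ is $\Lambda$-connected, then no nonempty coboundary $C'$ has $\mathrm{supp}(C')\subsetneq\mathrm{supp}(C)$.

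First I would record the basic translation between $C$ and its link. Since $C$ is a $d$-coboundary, $C=\link_v(C)\cdot\partial_d$ as stated, and more usefully, for a $d$-face $\sigma$ with $v\notin\sigma$ we have $\sigma\in C$ iff $\sigma\in\link_v(C)$, while for a $d$-face $\sigma\ni v$ we have $\sigma\in C$ iff the number of $(d-1)$-subfaces of $\sigma$ lying in $\link_v(C)$ has the appropriate parity (odd, after peeling off $v$). Then I would argue by contradiction: suppose $C'\subsetneq C$ is a nonempty coboundary. Its link $K'=\link_v(C')$ is then a $(d-1)$-complex, and from $C'\subseteq C$ one deduces a containment/parity relation between $K'$ and $K$. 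Now take any $d$-simplex $\sigma$ on $d+1$ vertices (with $v\notin\sigma$, which is fine since $C\cap\link_v$ is nonempty on such simplices provided $C'\ne\emptyset$); the key point is that $\sigma$ contributes to $C'$, equivalently to $C$, in a way controlled by how many of its $(d-1)$-subfaces lie in the respective links. The crucial structural input is: whenever $\tau\in K$ and $\tau'\notin K$ are $\Lambda$-adjacent through $\sigma=\tau\cup\tau'$, the same relation must hold in $K'$ (otherwise $\sigma$ would be forced into $C\setminus C'$ or would violate the coboundary condition for $C'$). Propagating this along the $\Lambda$-adjacency graph of $K$, which by hypothesis is connected, forces $K'$ to agree with $K$ (up to the global complement symmetry that fixes the generated coboundary), hence $C'=C$, a contradiction.

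Concretely, the mechanism I expect to use is the following local lemma: if $\tau,\tau'$ are the \emph{only} two $(d-1)$-faces of $\sigma$ in $K$, then $\sigma\notin C$ (it meets $\mathrm{link}$ in exactly two faces, which is even), and for $C'$ we likewise need $\sigma$ to meet $K'$ in an even set; combined with $K'\subseteq K$ this pins down whether $\tau,\tau'$ are both in or both out of $K'$, i.e.\ $\Lambda$-adjacency ``transports membership in $K'$'' from $\tau$ to $\tau'$. Thus the set of $(d-1)$-faces on which $K$ and $K'$ agree, and the set on which they disagree (agree with the complement), are each closed under $\Lambda$-adjacency; $\Lambda$-connectivity of $K$ then forces one of these to be all of $K$'s relevant faces, and chasing the definitions back up gives $C'\in\{\emptyset, C\}$. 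This is exactly the shape of the proof of the easy direction of Proposition~\ref{prop:2cut}, just with $(d-1)$-faces in place of edges and $\sigma$ on $d+1$ vertices in place of a triangle.

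The main obstacle is handling the bookkeeping for $d$-faces that \emph{contain} $v$: the link formalism cleanly controls $d$-faces avoiding $v$, but a coboundary must be consistent on all $d$-faces, and one must check that agreement of $K$ and $K'$ off $v$ already forces $C=C'$ everywhere — which follows because a coboundary is determined by its single link together with the identity $C=\link_v(C)\cdot\partial_d$, so two coboundaries with the same link are equal. A secondary subtlety is the global complementation symmetry (replacing $K'$ by the complement of $K'$ within the relevant $(d-1)$-faces leaves the generated coboundary unchanged), which is why the dichotomy ``$K'$ agrees with $K$ or with $\bar K$'' is the right conclusion rather than ``$K'=K$''. Once these two points are dispatched, the $\Lambda$-connectivity hypothesis does all the real work, exactly as in dimension $2$.
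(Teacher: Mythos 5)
Your core argument is correct and is essentially the paper's: from $C'\subsetneq C$ nonempty, $K'=\link_v(C')\subsetneq K$ nonempty (using that $\link_v$ is inclusion-preserving and that $C'=\link_v(C')\cdot\partial_d$, so $K'=\emptyset$ or $K'=K$ would force $C'=0$ or $C'=C$); for any $\Lambda$-adjacent pair $\tau,\tau'\in K$ with $\sigma=\tau\cup\tau'$, the two-facet count gives $\sigma\notin C$, hence $\sigma\notin C'$, hence $\tau,\tau'$ are either both in $K'$ or both out; $\Lambda$-connectivity then forbids a nonempty proper $K'\subsetneq K$. The paper phrases this as a direct contradiction (pick one crossing $\Lambda$-adjacency and show $\sigma\in C'\setminus C$), while you phrase it as ``$K'$ and $K\setminus K'$ are each $\Lambda$-closed, so one is empty''; these are the same idea.

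Two stray points worth cleaning up. First, $\Lambda$-adjacency is by definition a relation between two $(d-1)$-faces that both lie in $K$, so the sentence ``whenever $\tau\in K$ and $\tau'\notin K$ are $\Lambda$-adjacent'' is a misstatement (the correct version appears in your ``local lemma''). Second, the asserted ``global complementation symmetry'' is not true in the regime of interest: the all-ones $(d-1)$-cochain on $[n]\setminus\{v\}$ generates the all-ones $d$-coboundary when $d$ is even (each $d$-face has $d+1$ facets, an odd number), so replacing $K'$ by its complement does change $K'\cdot\partial_d$ precisely when $d$ is even, which is the case this lemma is used for. Fortunately this remark is not load-bearing, since $K'\subseteq K$ already forces the dichotomy $K'\in\{\emptyset,K\}$ without appeal to any complement; I would simply delete that aside.
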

\begin{proof}
Suppose that $\emptyset\neq C' \subsetneq C$ is a $d$-coboundary and let $K'=\link_v(C')$. Note that $\emptyset\neq K' \subsetneq K$ and therefore there are $(d-1)$-faces $\tau,\tau'$ which are $\Lambda$-adjacent in $K$ such that $\tau'\in K'$ and $\tau \notin K'$. Consider the $d$-dimensional simplex $\sigma=\tau\cup\tau'$. On the one hand, since exactly two of the facets of $\sigma$ are in $K$, it does not belong to $C=K\cdot\partial_d$. On the other hand, it does belong to $C'$ since exactly one of its facets ($\tau'$) is in $K'$. This contradicts the assumption that $C'\subset C$.
\end{proof}

\begin{proof}[Proof of Theorem \ref{thm:f2_even_dim}]
We start by constructing a random $(n-1)$-vertex $(d-1)$-dimensional complex $K$ that has a full skeleton, where each $(d-1)$-face is placed in $K$ independently with probability $p:=1-n^{-\frac{1}{3d-3}}$. We show that with probability $1-o_n(1)$ the complex $K$ is $\Lambda$-connected, whence $C:=K\cdot\partial_d$ is almost surely a $d$-hypercut of the desired density. 

We actually show that $K$ satisfies a condition that is stronger than $\Lambda$-connectivity. Namely, let $\tau,\tau' \in K$ be two distinct $(d-1)$ faces. We find $\pi, \pi'$ where $\pi$ is $\Lambda$-adjacent to $\tau$ and $\pi'$ is $\Lambda$-adjacent to $\tau'$ and in addition the symmetric differences get smaller $|\pi\oplus\pi'|<|\tau\oplus\tau'|$. To this end we pick some vertices $u\in\tau\setminus\tau'$, $u'\in\tau'\setminus\tau$ and aim to show that with high probability there is some $x\notin\tau\cup\tau'$ for which the following event $P_x$ holds:

\[
\pi_x:=\tau\cup\{x\}\setminus\{u\}\text{~and~} \pi'_x:=\tau'\cup\{x\}\setminus\{u'\} \text{~are in~} K, \text{~and~}
\]
\[
\tau\text{~ is~} \Lambda-\text{~adjacent to~}\pi_x\text{~ and~} \tau'\text{~ is~}\Lambda\text{-~adjacent to~}\pi'_x
\]

In other words, it is required that $\pi_x \in K$ and $\tau\cup\{x\}\setminus\{w\}\notin K$ for every $w\in\tau\setminus\{u\}$, and similarly for $\tau',\pi_x'$. Therefore $\Pr(P_x)=p^2\cdot(1-p)^{2d-2}$. Moreover, the events $\{P_x~|~x\notin\tau\cup\tau'\}$ are independent. Hence, the claim fails for some $\tau,\tau'$ with probability at most $\left(1-p^2\cdot(1-p)^{2d-2}\right)^{n-2d}=\exp{[-\Theta(n^{1/3})]}$. The proof is concluded by taking the union bound over all pairs $\tau,\tau'$.

\end{proof}

\section{Large Collapsible $2$-dimensional Hyperforests with no Shadow}
\label{sec:cl_ac}

The main result of this section is a construction of a shadowless $\f$-acyclic $2$-complex over every field $\f$. Recall that assuming Artin's conjecture there are infinitely many shadowless $2$-dimensional $\Q$-almost hypertrees. We saw in Section~\ref{sec:f2} that every $\f_2$-almost hypertree has a shadow and there we discussed its minimal caridnality. We now complement this by seeking the largest number of $2$-faces in shadowless hyperforests. Our construction works at once {\em for all fields} since it is based on the combinatorial property of $2$-collapsibility.
%
%
%
%

\begin{theorem}
\label{thm:cls_acyc}
For every odd integer $n$, there exists a $2$-collapsible $2$-complex $A=A_n$ with ${{n-1}\choose {2}} - (n+1)$ faces that remains $2$-collapsible after the addition of any new face. In particular, this complex is acyclic and shadowless over every field.
\end{theorem}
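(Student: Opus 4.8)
I want to construct an explicit $2$-complex $A_n$ on vertex set $\Z_n$ (or $[n]$), $n$ odd, whose $2$-faces number $\binom{n-1}{2}-(n+1)$, which is $2$-collapsible, and which stays $2$-collapsible when any new $2$-face is inserted. I would build $A_n$ as a ``cone-like'' or ``book-like'' complex: start with a $2$-complex on a small vertex set that has the desired robustness, and attach the remaining vertices one at a time in a way that preserves collapsibility. Concretely, I would try taking $A_n$ to consist of all triples $\{i,j,k\}$ except those in a carefully chosen ``forbidden'' family of size $n+1$ — for instance, omit a near-perfect matching's worth of triples through a fixed vertex together with a small number of extra triples. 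The count $\binom{n-1}{2}-(n+1) = \binom{n}{2} - 2n$ suggests that the right way to think of it is: the $1$-skeleton is full ($\binom n2$ edges), each $2$-collapse removes one edge and one $2$-face, and after all collapses we are left with $\binom n2 - \bigl(\binom{n-1}{2}-(n+1)\bigr) = 2n$ edges forming an acyclic graph (a forest) — actually $2n$ is too many for a forest on $n$ vertices, so more likely the collapse order leaves a spanning tree ($n-1$ edges) plus the right bookkeeping; I would recount carefully at the outset to pin down exactly which edges survive.

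The key steps, in order. First, fix the construction: specify the $n+1$ omitted triples explicitly (I expect something symmetric under the cyclic group, e.g. built from two or three of the ``difference classes'' $F_d$ used in Section~\ref{sec:Q}, or from a vertex-star structure). Second, exhibit an explicit sequence of elementary $2$-collapses witnessing that $A_n$ itself is $2$-collapsible: repeatedly locate an exposed edge $\tau$ (contained in a unique $2$-face) and collapse. With a cyclic construction one should be able to collapse an entire difference class of edges in one ``sweep,'' mimicking the proof of the first bullet of Theorem~\ref{thm:noShadow}. Third — the crux — show that for \emph{any} triple $\sigma \notin A_n$, the complex $A_n \cup \{\sigma\}$ is still $2$-collapsible. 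Here I would argue: among the $n+1$ omitted triples, adding back any one of them only enlarges the link of each edge, so an edge that was exposed in $A_n$ is still exposed unless $\sigma$ used it; and $\sigma$ itself contributes three edges, each now lying in $\sigma$ plus whatever $2$-faces of $A_n$ contained it. The point is to show there is always \emph{some} exposed edge to start the collapse of $A_n\cup\{\sigma\}$, and that after peeling off $\sigma$ (via one of its three edges, if that edge lies in no other face) or peeling off a $2$-face that shares an edge with $\sigma$, we return to a sub-complex that is collapsible by the analysis of step two. Equivalently, and perhaps more cleanly: show that $A_n \cup \{\sigma\}$ contains an exposed edge and that collapsing it reduces to a complex of the same ``type'' on fewer faces, by induction.

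**The main obstacle.** The delicate point is the robustness claim (step three), specifically guaranteeing that \emph{every} one of the roughly $2n$ extension complexes $A_n\cup\{\sigma\}$ admits a starting exposed edge and then collapses all the way down. Collapsibility is not monotone in general, so this requires the omitted family to be chosen so that no added triple can ``block'' every collapse — e.g. we must avoid creating a configuration where every edge of $A_n \cup\{\sigma\}$ sits in at least two $2$-faces (which would happen if the surviving-edge forest were too dense near $\sigma$). I expect the cyclic symmetry to reduce the number of genuinely distinct cases of $\sigma$ to a small constant (one case per difference class, or per orbit of omitted triples), after which each case is a short, if fiddly, explicit collapse argument. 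A secondary concern is the parity hypothesis ``$n$ odd'': I would watch for where it is actually used — most likely in making the symmetric family of size exactly $n+1$ split into the right residue classes so that the sweeps go through cleanly, analogous to the role of $n$ being an odd prime in Section~\ref{sec:Q}. Once the construction and the base collapse are written down, the acyclicity and shadowlessness are immediate: a $2$-collapsible complex is acyclic over every field, and if $A_n\cup\{\sigma\}$ is acyclic for every $\sigma\notin A_n$ then no $\sigma$ lies in the span of $A_n$, i.e. $SH(A_n)=\emptyset$ over every field.
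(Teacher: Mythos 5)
Your high-level plan — pick a $\Z_n$-symmetric explicit complex, verify $2$-collapsibility by an ordered sweep, then check robustness against any added $\sigma$ by reducing to a small residual configuration — is the right shape, and your closing observation (that $A_n\cup\{\sigma\}$ being acyclic for all $\sigma$ gives shadowlessness over every field) is correct and is exactly how the paper derives the last sentence of the theorem. But there is a real gap: you never actually produce the complex, and neither of the candidate constructions you float works. A vertex-star gives $\binom{n-1}{2}$ faces and is a hypertree, not what is needed; arithmetic-progression difference classes as in Section~\ref{sec:Q} give $\binom{n-1}{2}-1$ faces and, as Section~\ref{sec:f2} shows, an $\f_2$-almost-hypertree always has a shadow, so you genuinely must drop to a smaller complex with a different structure. (Also, your opening sentence ``all triples except a forbidden family of size $n+1$'' misreads the count by an order of magnitude — the complex has $\binom{n-1}{2}-(n+1)=\Theta(n^2)$ faces, not $\binom n3-\Theta(n)$ — though you partly catch this later.)

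The paper's actual construction is of a different flavor than anything you propose: on $\Z_n$, each edge $(x,x+a)$ with $1\le a<n/2$ and $a\notin\{1,3\}$ ``yields'' the single $2$-face $\rho_{x,a}=\{x,\,x+a,\,x+\lfloor a/2\rfloor\}$, and $A_n$ is exactly the set of these. This is a binary-subdivision structure, not a difference-class union. The base collapse goes by decreasing edge length (the longest edges are always exposed), roughly as you anticipate. The crux, which your sketch does not reach, is controlling what survives after adding an arbitrary $\sigma=\{x,y,z\}$: the paper introduces the recursively defined triangulated ``polygons'' $C_{u,v}$, shows that the maximal top-down collapse of $A_n\cup\{\sigma\}$ always stalls at exactly $\Delta_\sigma=\{\sigma\}\cup C_{x,y}\cup C_{x,z}\cup C_{y,z}$, and then proves a leaf-vertex lemma (Claim~\ref{clm:only_leaf}): some vertex lies in exactly one of $C_{x,y},C_{x,z},C_{y,z}$, which supplies a chain of exposed edges along which $\sigma$ itself can be collapsed away. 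Your proposal correctly locates where the difficulty lives — ``guarantee there is always some exposed edge to start the collapse of $A_n\cup\{\sigma\}$'' — but offers no mechanism for finding one, and without the $C_{u,v}$ decomposition and the leaf-vertex claim the case analysis you hope will be ``short if fiddly'' has nothing to hang on.
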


\begin{proof}
The vertex set $V=V(A)$ is the additive group $\Z_n$. All additions here are done $\bmod ~n$. Edges in $A$ are denoted $(x,x+a)$ with $a<n/2$, and such an edge is said to have {\em length} $a$. Also, for $a>1$, $b=\floor{\frac a2}$ is uniquely defined subject to $1\le b <a < n/2$. For every $x\in \Z_n$ and $n/2>a>1$ we say that the edge $(x, x+a)$ {\em yields} the face $\rho_{x,a}:=\{x,x+a,x+\floor{\frac a2}\}$ of {\em length} $a$. These are $A$'s $2$-faces:

\[\{\rho_{x,a} ~|~ n/2>a\ne 1,3,~x\in \Z_n\}\]

It is easy to $2$-collapse $A$ by collpasing $A$'s faces in decreasing order of their lengths. In each phase of the collapsing process, the longest edges in the remaining complex are exposed and can be collapsed.

It remains to show that the complex $A\cup\{\sigma\}$ is $2$-collapsible for every face $\sigma=\{x,y,z\}\notin A$. To this end, let us carry out as much as we can of the "top-down" collapsing process described above. Clearly some of the steps of this process become impossible due to the addition of $\sigma$, and we now turn to describe the complex that remains after all the possible steps of the previous collapsing process are carried out. Subsequently we show how to $2$-collapse this remaining complex and conclude that $A\cup\{\sigma\}$ is $2$-collpasible, as claimed.

For every $n/2>a\ge 1,~x\in \Z_n$ we define a subcomplex $C_{x,x+a} \subset A$. If $a=1$ or $3$, this is just the edge $(x, x+a)$. For all $n/2>a\ne 1, 3$ it is defined recursively as $C_{x,~x+\floor{\frac a2}}\cup C_{x+\floor{\frac a2},~x+a}\cup \{\rho_{x,a}\}$.

Note that $C_{x,y}$ is a triangulation of the polygon that is made up of the edge $(x,y)$ and $C(x,y)$'s edges of lengths $1$ and $3$.

Our proof will be completed once we (i) Observe that this remaining complex is $\Delta_\sigma:=\{\sigma\}\cup C_{x,y}\cup C_{x,z}\cup C_{y,z}$, and (ii) Show that $\Delta_\sigma$ is $2$-collapsible.

Indeed, toward (i), just follow the original collapsing process and notice that $\Delta_\sigma$ is comprised of exactly those faces in $A$ that are affected by the introduction of $\sigma$ into the complex.

We will show (ii) by proving that the face $\sigma$ can be collapsed out of $\Delta_\sigma$. Consequently, $\Delta_\sigma$ is $2$-collapsible to a subcomplex of the $2$-collapsible complex $A$. 

As we show below

\begin{claim}\label{clm:only_leaf}
There exists a vertex in $\Delta_\sigma$ which belongs to exactly one of the complexes $C_{x,y}, C_{x,z}$ or $C_{y,z}$.
\end{claim}

This allows us to conclude that the face $\sigma$ can be collapsed out of $\Delta_\sigma$. 
Say that the vertex $v$ is in $C_{x,y}$ and only there, and let $e$ be some edge of length $1$ or $3$ in $C_{x,y}$ that contains $v$. Follow the recursive consturction of $C_{x,y}$ as it leads from $(x,y)$ to $e$. Every edge that is encountered there appears only in the polygon $C_{x,y}$. By traversing this sequence in reverse, we collpase $\sigma$ out of $\Delta_\sigma$. 
\end{proof}

\begin{proof}[Proof of Claim \ref{clm:only_leaf}]
By translating $\bmod ~n$ if necessary we may assume that $x=0$ and $0<y,z-y<\frac n2$. If $z>\frac n2$, then $V(C_{0,y})\subseteq\{0,...,y\}$, $V(C_{y,z})\subseteq\{y,...,z\}$ and $V(C_{z,0})\subseteq\{z,...,n-1,0\}$, so their vertex sets are nearly disjoint altogether.

We now consider the case $z<\frac n2$ and assume by contradiction that the claim fails for $\sigma=\{0,y,z\}$. We want to conclude that $\sigma \in A$, and in fact $\sigma \in C_{0,z}$. By the recursive construction of $C_{0,z}$, this, in other words, means that both edges $(0,y)$ and $(y,z)$ are in $C_{0,z}$. We only prove that $(0,y)\in C_{0,z}$, and the claim $(y,z)\in C_{0,z}$ follows by an essentially identical argument.

So we fix $0<y<z<\frac n2$ and we want show that
\begin{equation}\label{eqn:unique_vertex}
\mbox{If $(0,y)\notin C_{0,z}$ then $V(C_{0,z})\cap [0,y] \neq V(C_{0,y})$.}
\end{equation}

Consequently, there is a vertex $v$ in $[0,y]$ which belongs to exactly one of the complexes $C_{0,z}$ and $C_{0,y}$. If such a $v<y$ exists, we are done, since $C_{y,z}$ has no vertices in $[0,y-1]$. Otherwise, 
\[
V(C_{0,z})\cap[0,y-1] = V(C_{0,y})\setminus \{y\}~~~\mbox{and}~~~y\notin C_{0,z}.
\]
But the vertices of $C_{0,z}$ form an increasing sequence from $0$ to $z$ with differences $1$ or $3$, so either $(y-2,y+1)\in C_{0,z}$ or $(y-1,y+2)\in C_{0,z}$. In the former case, both $y-2$ and $y$ are vertices in $C_{0,y}$, and therefore $y-1 \in C_{0,y}$ and consequently $y-1 \in C_{0,z}$, contrary to the assumption that the edge $(y-2,y+1)$ is in $C_{0,z}$. In the latter case, $y+2 \in C_{0,z}$ and $y+1 \notin C_{0,z}$. Which vertex succeeds $y$ in $C_{y,z}$? If $(y,y+1)\in C_{y,z}$ then $y+1$ belongs only to $C_{y,z}$. If $(y,y+3)\in C_{y,z}$ then $y+2$ is only in $C_{0,z}$.

We prove the implication~(\ref{eqn:unique_vertex}) by induction on $y$. The base cases where $y=1$ or $y=3$, are straightforward. If $(0,\floor{\frac y2})\notin C_{0,z}$ then by induction $V(C_{0,z})\cap [0,\floor{\frac y2}] \neq V(C_{0,\floor{\frac y2}})$. But $V(C_{0,y})\cap [0,\floor{\frac y2}] = V(C_{0,\floor{\frac y2}})$ and the conclusion that $V(C_{0,z})\cap [0,y] \neq V(C_{0,y})$ follows. We now consider what happens if $(0,\floor{\frac y2})\in C_{0,z}$. Which edge has yielded the $2$-face of $C_{0,z}$ that contains the edge $(0,\floor{\frac y2})$? It can be either $(0, 2\cdot\floor{\frac y2})$ or $(0, 2\cdot\floor{\frac y2}+1)$. But one of these two edges is $(0,y)$ which, by assumption, is not in $C_{0,z}$, so it must be the other one. Namely, either $y=2r$ and $(0,2r+1)\in C_{0,z}$ or $y=2r+1$ and $(0,2r)\in C_{0,z}$.

Let us deal first with the case $y=2r$. 
Assume, in contradiction to ~(\ref{eqn:unique_vertex}), that $V(C_{0,z})\cap [0,2r] = V(C_{0,2r})$. In particular $V(C_{0,z})\cap [r,2r] = V(C_{0,2r})\cap [r,2r]$. But since $(0,2r+1)$ is an edge of $C_{0,z}$ it also follows that $V(C_{0,2r+1})\cap [r,2r] = V(C_{0,z})\cap [r,2r]$. Therefore,
\begin{equation*}\label{eqn:vertex_segment_equality}
V(C_{0,2r+1})\cap [r,2r] = V(C_{0,2r})\cap [r,2r].
\end{equation*}
 
By the recursive consturction of $C_{0,2r+1}$ and $C_{0,2r}$ we obtain that $
V(C_{r,2r+1})\cap [r,2r] = V(C_{r,2r})$. By using the rotational symmetry of $A$ we can translate this equation by $r$ to conclude that
$V(C_{0,r+1})\cap [0,r] = V(C_{0,r})$. By induction, using the contrapositive of Equation~(\ref{eqn:unique_vertex}) this implies that $(0,r)\in C_{0,r+1}$ hence $r=1$. However, $C_{0,z}$ cannot contain both $(0,3)$ and $(0,1)$ so we are done.

The argument for $y=2r+1$ is essentially the same and is omitted.
\end{proof}

\section{Open Problems}
\label{section:open}
\begin{itemize}
\item
There are several problems that we solved here for $2$-dimensional complexes. It is clear that some completely new ideas will be required in order to answer these questions in higher dimensions. In particular it would be interesting to extend the construction based on arithmetic triples for $d>2$.
\item
An interesting aspect of the present work is that the behavior over $\f_2$ and $\Q$ differ, some times in a substantial way. It would be of interest to investigate the situation over other coefficient rings.
\item
How large can an acyclic closed set over $\f_2$ be?
Theorem \ref{thm:cls_acyc} gives a bound, but we do not know the exact answer yet.
\item
We still do not even know how large a $d$-cycle can be. In particular, for which integers $n, d$ and a field $\f$ does there exist a set of ${n-1 \choose d}+1$ $d$-faces on $n$ vertices such that removing any face yields a $d$-hypertree over $\f$?
\item
Many basic (approximate) enumeration problems remain wide open.
How many $n$-vertex $d$-hypertrees are there? What about $d$-collapsible complexes? A fundamental work of Kalai~\cite{kalai} provides some estimates for the former problem, but these bounds are not sharp. In one dimension there are exactly $\frac{(n-1)!}{2}$ inclusion-minimal $n$-vertex cycles. We know very little about the higher-dimensional counterparts of this fact.
\end{itemize}

\appendix
\section{Appendix - Proof of Theorem \ref{thm:5}}
Let $h(\gamma, x,y) = g(\gamma,x,y) - \frac 34 = \gamma - x^2 - \frac
34 - y(\gamma - \frac 12 - x)$. We need to show that $h \geq 0$
under the conditions of the optimization problem. This involves some case analysis.

First note $\gamma-\frac 12 - x \geq 0$ by condition 3, so that for fixed $\gamma, x$
we have that $h$ is a decreasing function of $y$. Thus, to minimize $h$, we need to determine the largest possible value of $y$.

\begin{enumerate}
\item We first consider the range $\gamma \leq 2$. Here condition 4 is redundant, and $y \leq \min \{x, \gamma - \frac 12 - x \}$.
 \begin{enumerate}
 \item We further restrict to the range $x \leq \frac{\gamma}{2} - \frac 14$, where
$x \leq \gamma - \frac 12 - x$, so the largest feasible value of $y$ is $y=x$. Note that $h|_{y=x} = \gamma - \frac 34 - x(\gamma - \frac 12)$. But $\gamma - \frac 12 \geq 0$ by condition 1, so $h$ is minimized by maximizing $x$, namely taking $x=  \frac{\gamma}{2} - \frac 14$. This yields $h=\frac 18-\frac 12(\gamma-1)(\gamma-2)$ which is positive in the relevant range $2\ge \gamma \ge 1$.
 \item In the complementary range $\frac{\gamma}{2} - \frac 14 \leq x$ the largest value for $y$ is $y = \gamma - \frac 12 - x$ which yields $h =  \gamma - x^2 -\frac 34 - (\gamma - \frac 12 - x)^2 =-2(x-\frac{\gamma}{2})(x-\frac{\gamma-1}{2})-\frac{(\gamma-1)(\gamma-2)}{2}$.
It suffices to check that $h\ge 0$ at both extreme value of $x$, namely $\frac{\gamma}{2} - \frac 14$ and $\gamma/2$. Also $h =0$ only at $x= \gamma/2$ with $\gamma = 1$ or $2$.
  \end{enumerate}

\item In the complementary range $\gamma \geq 2$, condition $3$
  is redundant and condition 4 takes over.
  \begin{enumerate}
  \item
Assume first that $x \leq \frac{1 + \gamma}{4}$, then $x \leq \frac{1+\gamma}{2} - x$ and the extreme value for $y$ is $y = x$. Again $h|_{y=x} = \gamma - \frac 34 - x(\gamma - \frac 12)$ and now the largest possible value of $x$ is $x = \frac{1 + \gamma}{4}$ which yields $h=\frac{(5-2\gamma)(\gamma - 1)}{8}$. This is positive at the range $\frac 94\ge \gamma\ge 2$.

 \item
When $x \geq\frac{1 + \gamma}{4}$ the minimum $h$ is attained at $y = \frac{1+\gamma}{2} - x$, so that
$$h = \gamma - x^2 - \frac 34 - (\frac{1+\gamma}{2} - x)\cdot (\gamma - \frac
12 - x)=-2(x-1)(x+1-\frac{3\gamma}{4})-\frac 12(\gamma-2)(\gamma-\frac 52).$$

For fixed $\gamma$ it suffices to check that $h\ge 0$ at the two ends of the range $1\ge x\ge\frac{1+\gamma}{4}$. At $x=1$ we get $h= -\frac 12(\gamma-2)(\gamma-\frac 52)$ which is nonnegative when $\frac 94\ge\gamma\ge 2$ with $h=0$ only at $\gamma=2$. When $x=\frac{1+\gamma}{4}$,  we get $h=\frac{(5-2\gamma)(\gamma - 1)}{8}$ which is positive for $\frac 94\ge\gamma\ge 2$.
 \end{enumerate}

\end{enumerate}

To sum up, $h\ge 0$ throughout the relevant range with two point where $h=0$, namely $\gamma=2, x=1, y=\frac 12$ and $\gamma=1, x=\frac 12, y=0$.

\end{document}